\newtheorem{theorem}{Theorem}[section]
\theoremstyle{plain}
\newtheorem{corollary}[theorem]{Corollary}
\newtheorem{proposition}[theorem]{Proposition}
\newtheorem{lemma}[theorem]{Lemma}
\theoremstyle{definition}
\newtheorem{definition}[theorem]{Definition}%[section]
\newtheorem{definitions}[theorem]{Definitions}
\newtheorem{example}{Example}
\newtheorem{notation}{Notation}
\newtheorem{prop}[theorem]{Proposition}
\numberwithin{equation}{section}
\let\oldtocsection=\tocsection
\let\oldtocsubsection=\tocsubsection
\let\oldtocsubsubsection=\tocsubsubsection
\renewcommand{\tocsection}[2]{\hspace{0em}\oldtocsection{#1}{#2}}
\renewcommand{\tocsubsection}[2]{\hspace{2em}\oldtocsubsection{#1}{#2}}
\renewcommand{\tocsubsubsection}[2]{\hspace{4em}\oldtocsubsubsection{#1}{#2}}
\theoremstyle{definition}
\newcommand{\intr}{\text{int}}
\newcommand{\fr}{\text{fr}}
\newcommand{\inr}{\text{in}}
\newcommand{\comp}{\text{Comp}}
\newcommand{\mcal}{\mathcal}
\newcommand{\id}{\text{id}}
\newcommand{\cl}{\text{cl}}
\begin{document}
\title[Generalized dunce hats are not splittable]{Generalized dunce hats are not splittable}
%\title[Contractible $n$-Manifolds and the Double $n$-Space Property]{Contractible $n$-Manifolds and the Double $n$-Space Property}
\author{Fredric Ancel and Pete Sparks}
%\address{Marquette University,
%Milwaukee, Wisconsin 53201}
%\email{peter.sparks@marquette.edu}
%\thanks{ Craig Guilbault.}

\date{?}
\subjclass{Primary: 57N13; Secondary: 57N15}
\keywords{?
}
%\textbf{Notes}\\
%
%citations\\
%not a subset\\
 %$w_E'$ prime on E\\
%p. 16 ¬ symbol for subscript\\
%
%Changes (first one on 2/8/18)\\
%
%If $f|N$ reverse orientation,\\
%
%removed proof square from end of statement of corollary aka 4\\
%
%``$A - J$ a 2-dimension submanifold of $D - J.$'' (p. 11 bottom of paragraph starting) Let T be a changed to ``$A - J$ is a 2-dimensional submanifold of $D - J.$'' \\
%
%``mapped by $q$ to disjoint closed subset of $D$'' p. 12 changed to ``mapped by $q$ to disjoint closed subsets of $D$''\\
%
%p. 16 (i) such that such E $\subset \cl(F)$ changed to such 
%that $E \subset \cl(F)$\\
%
%p. 16 (ii)  such that such E $\subset \cl(F)$ changed to  such that $E \subset \cl(F).$\\
%
%p. 16 Case i) such that such $E \subset \cl(F)$ changed to such that $E \subset \cl(F)$\\
%
%
%
%p.17 such that such $E \subset \cl(F).$ changed to such that $E \subset \cl(F).$\\

\begin{abstract} A \emph{generalized dunce hat} is a 2-dimensional polyhedron created by attaching the boundary of a disk $\Delta$ to a circle $J$ via a map $f:\partial \Delta \to J$ with the property that there is a point $v \in J$ such that $f^{-1}(\{v\})$ is a finite set containing at least 3 points and $f$ maps each component of $\partial \Delta - f^{-1}(\{v\})$ homeomorphically onto $J - \{v\}.$  \textbf{Theorem:}  No generalized dunce hat is the union of two proper subpolyhedra that each have finite first homology groups.  This result undermines a strategy for proving that the interior of the Mazur compact contractible 4-manifold M is \emph{splittable in the sense of Gabai} (i.e., $\intr(M) = U \cup V$ where $U,$ $V$ and $U \cap V$ are each homeomorphic to Euclidean 4-space).

\end{abstract}
\maketitle

\section{Introduction}
An open (non-compact boundaryless) $n$-manifold is \emph{splittable in the sense of Gabai} (or, more briefly, \emph{splittable}) if it is the union of two open subsets $U$ and $V$ such that $U,\  V$ and $U \cap V$ are homeomorphic to Euclidean $n$-space.  It is easily seen that Euclidean $n$-space is splittable.  What is not obvious is whether there exist any other splittable contractible open manifolds, and whether there exist any non-splittable contractible open manifolds.

	In 2009 David Gabai surprised the geometric topology community by showing that the \emph{Whitehead 3-manifold} is splittable [G].  The Whitehead 3-manifold is one of the fundamental examples in 3-manifold topology; it is a contractible open 3-manifold that is not homeomorphic to Euclidean 3-space.  (It was discovered by J. H. C. Whitehead in 1935 as a fatal obstruction to his own proposed proof of the 3-dimensional Poincare Conjecture. [W])  Gabai's observation inspired alternative proofs and generalizations by other topologists: in [GRW] it is proved that there exist uncountably many topologically distinct splittable contractible open 3-manifolds and that there exist uncountably many topologically distinct non-splittable contractible open 3-manifolds.  Pete Sparks in his 2014 Ph.D. thesis [S] extended the study of the spittability phenomenon to dimension 4 by constructing an uncountable family of topologically distinct splittable contractible open 4-manifolds.  The question of whether there exists a non-splittable contractible open 4-manifold remains unresolved.

	The Mazur 4-manifold [M] is a compact contractible 4-manifold with boundary whose interior is not homeomorphic to Euclidean 4-space.  It is a fundamental object in the study of 4-dimensional manifolds.  It is not known whether the interior of the Mazur 4-manifold is splittable.  One strategy for attacking this question relies on the fact that the Mazur 4-manifold has a \emph{spine} that is a \emph{dunce hat}.  %(See Figure 1.)  
In general, a subpolyhedron $P$ of a compact piecewise linear manifold $M$ is called a \emph{spine} of $M$ if $M$ collapses to $P$.  The \emph{dunce hat} is a 2-dimensional polyhedron that is contractible but not collapsible, and according to [Z], the Mazur 4-manifold has a spine which is a dunce hat.  In general, if a compact piecewise linear manifold has a spine that can be expressed as the union of two collapsible proper subpolyhedra that intersect in a collapsible subpolyhedron, then these subpolyhedra can be thickened to give a splitting of the interior of the manifold.  (See Proposition 3.1.9 of [S].)  Thus, a potential strategy for proving the splittability of the interior of the Mazur 4-manifold would be to exhibit the dunce hat as the union of two collapsible subpolyhedron that intersect in a collapsible subpolyhedron.  The Theorem proved in this paper implies that this strategy can't work, because the dunce hat can't be divided into two collapsible subpolyhedra.  So the interior of the Mazur 4-manifold remains a potential candidate for a non-splittable contractible open 4-manifold.

%[Figure 1]

%
%\begin{figure}[h!]
%\centering
%\vspace{-5.7in}
%%\centering
%\includegraphics[height=7in]{Mazur_Curve1}                   %USE TO ADD FIGURES
%%\includegraphics[width=13pt]{Mazur_Curve1}
%\caption{$\Gamma \subset \partial( S^1 \times \mathbb{B}^3)$} %\subset$ the Mazur Manifold}
%%\caption{}
%\label{Mazcurve}
%\end{figure}

	One might hope to prove that because the Mazur 4-manifold has a spine that can't be split into two collapsible subpolyhdra, then its interior is not splittable.  However, there are obstacles to giving such a proof because the relationship between compact contractible manifolds and their spines is complex.  Two non-homeomorphic compact contractible 4-manifolds can have homeomorphic spines, and a single compact contractible 4-manifold can have two non-homeomorphic spines - one of which can be divided into collapsible subpolyhedra while the other can't.  Thus, for compact contractible manifolds, a spine which can't be divided into collapsible subpolyhedra does not compel a non-splittable interior.  Specifically, a 4-ball whose interior is splittable has an disc spine, which can obviously be divided into two collapsible subpolyhedra that intersect in a collapsible subpolyhedron, and it has a dunce hat spine which, according to the Theorem of this paper, can't be divided in this fashion.  %(See Figure 2.)  
Note that the preceding observation has the obvious consequence that the 4-ball and the Mazur 4-manifold which are not homeomorphic have spines which are homeomorphic - namely, dunce hats.

%[Figure 2]

	We remark that in dimensions $n \geq 5,$ there exist splittable contractible open $n$-manifolds besides Euclidean $n$-space: the interior of every compact contractible $n$-manifold is splittable and every Davis $n$-manifold is splittable [AGS].  However, for $n \geq 5,$ no example of a non-splittable contractible open $n$-manifolds is known.

	The Theorem of this paper generalizes results of [GST] where it is shown that a specific triangulation of the \emph{original dunce hat} can't be expressed as the union of two collapsible subcomplexes.  (The original dunce hat arises when the map $f : \partial \Delta \to J$ is homotopic to a homeomorphism and the set $f^{-1}(\{v\})$ contains exactly 3 points.)
 
	During the preparation of this manuscript, the article [Bo] was posted.  The results of [Bo] significantly overlap the Theorem of this paper, but neither includes the other.  The techniques of [Bo] are quite different from techniques used here.  Furthermore, the movitation for the work in [Bo] (Lusternik-Schnirelmann category issues and the Andrews-Curtis conjecture) appears at first glance to be quite separate from the motivation for this paper (the splittability of the contractible open 4-manifolds), but the connection between these questions is provocative.

The authors thank Professor Craig Guilbault for helpful discussions and an idea that played a key role in the proof of Lemma 2.  

\section{The Theorem and constraints on its generalization}
   
\begin{definition}  Let $\Delta$ be a 2-dimensional disk, let $J$ be a simple closed curve and let $f : \partial \Delta \to J$  be a map.  Suppose there is a point $v \in J$ and an integer $p\geq 3$ such that $f^{-1}(\{v\})$ is a finite set with exactly $p$ elements, and $f$ maps each of the $p$ components of $\partial \Delta - f ^{-1}(\{v\})$ homeomorphically onto $J - \{v\}.$  (Hence, for each $y\in J,$ the set $f ^{-1}(\{y\})$ has exactly $p$ elements.)  The adjunction space $D = \Delta \cup_f J$ is called a \emph{generalized dunce hat}.  Identify $J$ with its image under the quotient map $\Delta \sqcup J\to D$ so that the quotient map restricted to $J$ is the identity.  (Here and subsequently, $\sqcup$ denotes disjoint union.)  Let $q :\Delta \to D$ denote the restriction of the quotient map $\Delta \sqcup J \to D$ to $\Delta.$  Observe that triangulations of $\Delta$ and $J$ can be chosen so that $\partial \Delta$ is a full subcomplex of $\Delta$ and $f : \partial\Delta\to J$ is simplicial.  Then these triangulations determine a triangulation of $D$ so that $J$ is a full subcomplex of $D$ and the quotient map $\Delta \sqcup J\to D$ is simplicial.  Call $v$ the \emph{singular vertex} of $D,$ and call $J$ the \emph{singular locus} of $D.$
\end{definition}
	
\begin{theorem}  No generalized dunce hat is the union of two proper subpolyhedra that each have finite first homology groups.
\end{theorem}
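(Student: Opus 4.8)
The plan is to argue by contradiction. Suppose $D = \Delta \cup_f J$ is a generalized dunce hat with singular vertex $v$ and singular locus $J$, and suppose $D = X \cup Y$ where $X$ and $Y$ are proper subpolyhedra each with finite first homology. The first observation is that $H_1(D;\mb{Q}) = 0$: since $f$ maps each of the $p \geq 3$ arcs of $\partial\Delta - f^{-1}(v)$ homeomorphically onto $J - \{v\}$, a Mayer--Vietoris or direct cellular computation shows $H_1(D) \cong \mb{Z}/(p-1)$ (for the original dunce hat $p=3$ gives the familiar acyclic answer $\mb{Z}/2$ rationally trivial), so $D$ is $\mb{Q}$-acyclic and in particular simply connected rationally. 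The heart of the matter is to understand how $X$ and $Y$ must meet $J$ and the open 2-cell $q(\intr\Delta)$. I would first reduce to the case where $X$ and $Y$ are \emph{subcomplexes} of a triangulation of $D$ of the type guaranteed by the Definition (in which $J$ is full and $q$ is simplicial), since any subpolyhedra can be arranged to be subcomplexes after subdivision, and finiteness of $H_1$ and properness are preserved.

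Next I would analyze the local structure of $D$ along $J$. Away from $v$, $D$ is locally the cone on $p$ points crossed with an interval — i.e., locally $p$ half-planes glued along a line — so a neighborhood of a point of $J - \{v\}$ in $D$ is a "book" with $p$ pages. At $v$ the local picture is more complicated but still a cone on a graph. The key combinatorial input, which I expect to be the main obstacle, is a Mayer--Vietoris argument: from $D = X \cup Y$ we get the exact sequence
\[
H_2(D) \to H_1(X \cap Y) \to H_1(X) \oplus H_1(Y) \to H_1(D).
\]
Since $H_2(D) = 0$ (a $2$-complex with an "extra" identification has no $2$-cycles; the fundamental class of $\Delta$ is killed because $\partial\Delta$ wraps $p$ times with signs that don't cancel) and $H_1(D)$ is finite, finiteness of $H_1(X), H_1(Y)$ forces $H_1(X \cap Y)$ to be \emph{finite}. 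So $X \cap Y$ is a proper subpolyhedron of $D$ with finite $H_1$, and $X \cap Y$ separates $D$ in the sense that $D - (X\cap Y)$ is the disjoint union of the open sets $X - Y$ and $Y - X$. The contradiction should come from the fact that the open $2$-cell $E = q(\intr\Delta)$ is connected and dense, so $E$ lies (up to the lower-dimensional set $X\cap Y$) entirely in one of $X-Y$ or $Y-X$, say $E \setminus (X\cap Y) \subseteq X - Y$; then $Y \subseteq J \cup (X\cap Y)$, forcing $Y$ to be a graph, hence $X \supseteq \overline{E} = D$, contradicting properness — \emph{unless} $X\cap Y$ cuts across the $2$-cell. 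The real work is ruling out that $X \cap Y$ can be a $1$-complex running through $E$ that genuinely separates $E$: such a separating arc or circle in $E \cong \intr\Delta$, together with the way $\partial\Delta$ is glued up, must create either a free face (giving a collapse that contradicts minimality) or an infinite cyclic summand in $H_1(X)$ or $H_1(Y)$.

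To make that last step rigorous I would set up a careful "cut locus" argument on $\Delta$: pull everything back via $q$ to $\Delta$, where $q^{-1}(X)$ and $q^{-1}(Y)$ are subcomplexes of $\Delta$ covering it, with $q^{-1}(X\cap Y) \supseteq q^{-1}(X)\cap q^{-1}(Y)$ but also containing the "seam" where distinct boundary arcs get identified. The condition that $H_1(X)$ be finite, transported through the quotient $\partial\Delta \to J$, constrains which subarcs of $\partial\Delta$ can lie in $q^{-1}(X)$: because the $p$ arcs all map the same way onto $J$, if $X$ contains $J$ and part of the $2$-cell, the relative cycles coming from arcs in $\intr\Delta$ joining boundary points that are identified in $D$ become genuine $1$-cycles in $X$, and counting them against the finitely many independent classes allowed produces the contradiction. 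I expect the $p \geq 3$ hypothesis to enter precisely here — with $p = 2$ ($D$ a sphere) no contradiction arises, and with $p \geq 3$ the "extra" pages force a nontrivial rational class. This pull-back-to-$\Delta$ bookkeeping, reconciling the combinatorics of the gluing map $f$ with the Mayer--Vietoris constraint, is the technical crux, and it is presumably where the idea attributed to Guilbault (used in Lemma~2) does its work.
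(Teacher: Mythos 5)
There is a genuine gap --- in fact two. First, your homological computations for $D$ itself are wrong in general. One has $H_1(D) \cong \mathbb{Z}/|\deg(f)|$, and $\deg(f) = p - 2k$ where $k$ is the number of the $p$ arcs of $\partial\Delta - f^{-1}(\{v\})$ on which $f$ reverses orientation. Nothing in the definition prevents $\deg(f) = 0$ (for example $p = 4$ with two arcs mapped each way, which is exactly the attaching map used in the paper's second Example), and in that case $H_1(D) \cong \mathbb{Z}$ and $H_2(D) \cong \mathbb{Z}$. So $D$ need not be $\mathbb{Q}$-acyclic, the fundamental class of $\Delta$ need not die, and your Mayer--Vietoris sequence no longer forces $H_1(X \cap Y)$ to be finite. (Also, the original dunce hat is contractible, so its $H_1$ is $0$, not $\mathbb{Z}/2$; the formula $\mathbb{Z}/(p-1)$ has no basis.) Second, and more fundamentally, the entire contradiction is deferred: you yourself identify the only nontrivial case as the one where $X \cap Y$ ``cuts across the $2$-cell,'' and your treatment of it is a statement of intent (``must create either a free face \dots or an infinite cyclic summand,'' ``counting them \dots produces the contradiction'') rather than an argument. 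Note also that $X \cap Y$ need not be $1$-dimensional --- two closed subpolyhedra covering $D$ typically overlap in a $2$-dimensional set --- so the picture of a cut locus of arcs and circles in $q(\intr(\Delta))$ is itself an unjustified reduction.

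For comparison, the paper's proof does not apply Mayer--Vietoris to $D = A \cup B$ at all. It fixes a point $b \in J - B$ (after showing $B \not\supset J$), replaces $A$ by a regular neighborhood so that $q^{-1}(A)$ is a nice $2$-submanifold of $\Delta$, proves that each component of $q^{-1}(A)$ is a disk and that $A$ is a ``tree of disks,'' and then assembles inside $A$ a disk $G$ with $b \in \inr(G)$ and $\partial G \subset \fr(A) \subset B$. The contradiction is that the inclusion-induced map $H_1(\partial G') \to H_1(D - \{b\})$ (for a suitable smaller disk $G'$) must simultaneously factor through the finite group $H_1(B)$ and be a monomorphism of an infinite cyclic group; the monomorphism statement is Lemma 2, proved by retracting $D - \{b\}$ onto the suspension of $p$ points and then mapping onto a wedge of two circles, and this is where $p \geq 3$ genuinely enters. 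Your sketch contains neither this construction nor any substitute for it, so as it stands it is a plan rather than a proof.
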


	Before proving this Theorem, we present a variety of evidence showing that the Theorem is, in some sense, a best possible result.  We show that if the hypotheses of the Theorem are weakened in two different ways, then a false statement results.  Also we show that the conclusion of the Theorem can't be strengthened to rule out expressing a generalized dunce hat as the union of \emph{three} proper subpolyhedra with trivial first homology.

	First, if the definition of \emph{generalized dunce hat} is broadened to allow attaching maps $f : \partial\Delta\to J$ for which the set $f ^{-1}(\{v\})$ has one or two elements, then it becomes possible in some cases to express the quotient space $\Delta \cup_f J$ as the union of two proper collapsible subpolyhedra.  For instance, if $f ^{-1}(\{v\})$ has one element, then $f : \partial\Delta\to J$ is a homeomorphism and $\Delta \cup_f J$ is a disk which can be expressed as the union of two proper subdisks.  If $f ^{-1}(\{v\})$ has two elements, then there are two possibilities: either $f : \partial\Delta\to J$ is null-homotopic or it is a degree two covering map.  In the case that $f : \partial\Delta\to J$ is null-homotopic, $\Delta \cup_f J$ is the quotient space obtained from the 2-sphere by identifying a pair of points.  In this situation, $\Delta \cup_f J$ can also be expressed as the union of two subdisks.  In the case that $f : \partial\Delta\to J$ is a degree two covering map, then $\Delta \cup_f J$ is a projective plane.  The Theorem holds for the projective plane; indeed, a Mayer-Vietoris sequence argument shows that the projective plane can't be expressed as the union of two proper subpolyhedra with finite first homology groups.

Second we consider the possibility of broadening the definition of generalized dunce hat by considering quotient spaces constructed by attaching a disk to a circle and allowing more than one singular vertex.  The following example shows that the Theorem becomes false if we allow two singular vertices.

\begin{example}  There is a quotient space $H = \Delta \cup_g J$ called the \emph{Jester's hat} that is similar to the dunce hat but which has two singular vertices instead of one, and which is the union of two proper collapsible subpolyhedra.  The Jester's hat originates in Pete Sparks 2014 Ph.D. thesis [S].  In more detail, let $w_0, w_1, \ldots , w_5$ be six distinct points of $\partial\Delta$ listed in cyclic order, and for $1 \leq i \leq 6,$ let $E_i$ denote the closure of the component of $\partial\Delta - \{ w_0, w_1, \ldots , w_5 \}$ with endpoints $w_{i-1}$ and $w_i$ (where $w_6 = w_0$).  Let $v_0$ and $v_1$ be two distinct points of $J,$ and let $J_1$ and $J_2$ be the closures of the two components of $J - \{ v_0, v_1 \}.$  Let $g : \partial\Delta\to J$ be a map such that $g^{-1}(\{v_0\}) = \{ w_0, w_2, w_4 \}$, $g^{-1}(\{v_1\}) = \{ w_1, w_3, w_5 \},$ $g$ maps each of $E_1,\ E_2,$ and $E_3$ homeomorphically onto $J_1,$ and $g$ maps each of $E_4,\ E_5,$ and $E_6$ homeomorphically onto $J_2.$  (This description of $g$ fixes $g$ up to isotopies that preserve $g|\{ w_0, w_1, \ldots , w_5 \}.$)  Then the quotient space $H = \Delta \cup_g J$ is called the \emph{Jester's hat}.  %(See Figure 3.)  
Let $r : \Delta\to H$ denote the restriction of the quotient map $\Delta \sqcup J\to H$ to $\Delta.$  Let $L$ be an arc in $\Delta$ such that $\partial L = \{ w_0, w_3 \}$ and $\intr(L) \subset \intr(\Delta).$  Then $\Delta$ is the union of two disks $\Delta_1$ and $\Delta_2$ where $\partial\Delta_1 = E_1 \cup E_2 \cup E_3 \cup L,\  \partial\Delta_2 = E_4 \cup E_5 \cup E_6 \cup L$ and $\Delta_1 \cap \Delta_2 = L.$  Hence, $H = r(\Delta_1) \cup r(\Delta_2).$  Furthermore, $r(\Delta_1)$ and $r(\Delta_2)$ are easily seen to be collapsible.  (The map $r$ transports a collapse of $\Delta_1$ onto $E_1 \cup E_2 \cup E_3$ to a collapse of $r(\Delta_1)$ onto $J_1,$ and $J_1$ collapses to a point.  Similarly, $r(\Delta_2)$ collapses to $J_2$ which collapses to a point.)
\end{example}
%[Figure 3]

Third, the following proposition shows that the conclusion of the Theorem can't be strengthened to rule out expressing a generalized dunce hat as the union of three proper subpolyhedra with trivial first homology.

\begin{prop}  Every generalized dunce hat is the union of three proper collapsible subpolyhedra.
\end{prop}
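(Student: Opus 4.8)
The plan is to exhibit three explicit subpolyhedra that cover $D$, each of which visibly collapses onto an arc. Fix, as in the Definition, triangulations of $\Delta$ and $J$ making $f$ (hence $q$) simplicial, and write $f^{-1}(\{v\})=\{w_0,\dots,w_{p-1}\}$ in cyclic order, with $A_j$ ($1\le j\le p$, indices mod $p$) the closed sub-arc of $\partial\Delta$ from $w_{j-1}$ to $w_j$, so that $f$ maps $A_j$ onto $J$, takes $w_{j-1},w_j$ to $v$, and maps $\intr A_j$ homeomorphically onto $J-\{v\}$. Choose a short closed sub-arc $\nu\subseteq J$ with $v\in\intr\nu$, put $\mu=\cl(J-\nu)$ (a closed arc with $v\notin\mu$ and $\mu\cup\nu=J$), and set $A_j^{\mathrm m}=(f|A_j)^{-1}(\mu)$, which lies in $\intr A_j$ since $v\notin\mu$; then $f$ carries each $A_j^{\mathrm m}$ homeomorphically onto the single arc $\mu$, and the components of $\partial\Delta-\bigcup_j A_j^{\mathrm m}$ are $p$ open arcs, the one straddling $w_j$ having some small closure $\beta_j$ with $f(\beta_j)\subseteq\nu$. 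Now let $Y_2$ be a thin regular neighborhood in $\Delta$ of $A_1^{\mathrm m}\cup\dots\cup A_p^{\mathrm m}$ (a disjoint union of $p$ $2$-disks), let $Y_1=H_0\sqcup\dots\sqcup H_{p-1}$, where $H_j$ is a $2$-disk neighborhood of $w_j$ in $\Delta$ with $H_j\cap\partial\Delta=\beta_j$ and the rest of $\partial H_j$ a chord $\lambda_j$ whose interior lies in $\intr\Delta$, and let $Y_3=\cl(\Delta-(Y_1\cup Y_2))$, a $2$-disk contained in $\intr\Delta$. Put $X_i=q(Y_i)$. For thin enough neighborhoods (and $\beta_j$ large enough that $\bigcup_j\beta_j\cup\bigcup_j A_j^{\mathrm m}=\partial\Delta$) one has $Y_1\cup Y_2\cup Y_3=\Delta$, hence $X_1\cup X_2\cup X_3=q(\Delta)=D$; moreover $X_1$ and $X_2$ omit the image of an interior point of $\Delta$, while $X_3\cap J=\varnothing$, so all three are proper. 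After one more subdivision the $Y_i$, $\lambda_j$, $\mu$, $\beta_j$ may be taken to be subcomplexes, so the $X_i$ are subcomplexes of $D$.

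The remaining point is that each $X_i$ collapses to a point. Since $q$ is injective on $\intr\Delta$, the map $q|Y_3$ is an embedding, so $X_3$ is a $2$-disk, hence collapsible. Since $q$ creates identifications only along $\partial\Delta$ and each $f|A_j^{\mathrm m}$ is a homeomorphism onto $\mu$, the space $X_2=q(Y_2)$ is obtained from the $p$ disjoint disks of $Y_2$ by gluing their attaching arcs $A_j^{\mathrm m}$ onto the common arc $\mu$; thus $X_2$ is a \emph{book with $p$ pages} over the spine $\mu$. It collapses by peeling off the pages one at a time — each page, a $2$-disk, collapses onto its edge $\mu$, a collapse whose removed cells lie in $q(\intr\Delta)$ and are disjoint for different pages — leaving the arc $\mu$, which collapses to a point. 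Finally, each $H_j$ is a $2$-disk meeting $\partial\Delta$ in the boundary arc $\beta_j$, so $H_j\searrow\beta_j$; this collapse removes only cells with interiors in $\intr\Delta$ (the $2$-cells of $H_j$ and the chord $\lambda_j$), where $q$ is injective and identifies nothing, so it descends through $q$ to $q(H_j)\searrow q(\beta_j)$. These descended collapses, for $j=0,\dots,p-1$, act on disjoint families of cells of $X_1$ lying in $q(\intr\Delta)$; carrying all of them out collapses $X_1$ onto $\bigcup_j q(\beta_j)=\bigcup_j f(\beta_j)\subseteq J$. This last set is a connected subcomplex of the circle $J$ (each $f(\beta_j)$ contains $v$) and a proper one (each $\beta_j$ is short), hence an arc, which collapses to a point. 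So $X_1$ is collapsible, and the proposition follows.

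I expect the decisive point — and the reason the construction must look like this — to be that no ``chunky'' piece works: if $E\subseteq\Delta$ is a $2$-disk whose trace $E\cap\partial\Delta$ has $f$-image all of $J$, then $E$ deformation retracts onto a proper boundary arc that $q$ sends onto $J$, so $q(E)\simeq S^1$ and $q(E)$ is not even contractible. Thus the tempting decompositions — coning the centre of $\Delta$ over sectors of $\partial\Delta$, or grouping faces of a barycentric-type subdivision — all yield non-simply-connected pieces, and one is forced to use pieces whose trace on $\partial\Delta$ never covers all of $J$; the families $\{A_j^{\mathrm m}\}$ and $\{\beta_j\}$ are engineered exactly so that the three pieces together cover $\partial\Delta$ while the trace of each one maps onto a proper sub-arc of $J$. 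The single genuine difficulty is the singular vertex $v$: it lies in every $\beta_j$ and in no $A_j^{\mathrm m}$, so it cannot be absorbed into a union of ``short interior-of-edge'' pieces, which is precisely why the corner piece $X_1$ — a bouquet of $p$ disks clustered at $v$ — is needed as the third piece, and verifying the collapsibility of $X_1$ and of the book $X_2$ is where the real work lies.
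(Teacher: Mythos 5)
Your decomposition is correct and is essentially the paper's own proof: the paper covers $J$ by two arcs with $\intr(J_1)\cup\intr(J_2)=J$, takes their simplicial neighborhoods in a second derived subdivision of $D$ (which collapse to $J_1$ and $J_2$ by regular/simplicial neighborhood theory --- your explicit book-of-$p$-pages and corner-disk collapses verify this by hand for $\mu$ and $\nu$), and encloses the remaining compact subset of $q(\intr(\Delta))$ in a PL disk. One warning that does not affect the proof itself: the heuristic in your closing paragraph is false, since with $E=\Delta$ the trace covers $J$ yet $q(E)=D$ is contractible for the classical dunce hat; a deformation retraction of $E$ onto a boundary arc does not push down through $q$, which is exactly why the paper's Theorem about \emph{two} pieces requires the long argument that follows.
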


\begin{proof}  Let $D$ be a generalized dunce hat.  Let $J_1$ and $J_2$ be arcs in $J$ such that $\intr(J_1) \cup \intr(J_2) = J,$ and triangulate $D$ so that $J_1$ and $J_2$ are subcomplexes.  For $i = 1,\ 2,$ let $A_i$ be a simplicial neighborhood of $J_i$ in a second derived subdivision of this triangulation.  Then $A_i$ collapses to $J_i$ which in turn collapses to a point.  The set $D - (\intr(A_1) \cup \intr(A_2))$ is a compact subset of the open disk $q(\intr(\Delta)).$  Hence, this set is contained in the interior of a piecewise linear disk $A_3$ that lies in $q(\intr(\Delta)).$  $A_1,\ A_2$ and $A_3$ are collapsible subpolyhedra of $D$ that cover $D.$
\end{proof}
	Finally, the following example shows that the Theorem can't be strengthened to conclude that a generalized dunce hat $D$ can't be expressed as the union of two proper subpolyhedra $A$ and $B$ with the properties that the inclusion $A\to D$ induces the zero homomorphism on first homology and $B$ is collapsible.

\begin{example}  There is a generalized dunce hat $D = \Delta \cup_f J$ and proper subpolyhedra $A$ and $B$ of $D$ such that $A \cup B = D,$ the inclusion induced homomorphism $H_1(A)\to H_1(D)$ is zero and $B$ is collapsible.  Assign an orientation to $\partial\Delta$ and let $w_0,\ w_1,\ w_2,\ w_3$ be four distinct points of $\partial\Delta$ listed in orientation preserving cyclic order, and for $1 \leq i \leq 4,$ let $E_i$ denote the closure of the component of $\partial\Delta - \{ w_0, w_1, w_2 , w_3 \}$ with endpoints $w_{i-1}$ and $w_i$ (where $w_4 = w_0$).  Assign an orientation to  $J - \{v\}$.  Let $f : \partial\Delta\to J$ be a map such that $f ^{-1}(\{v\}) = \{ w_0, w_1, w_2, w_3 \},\ f|\intr(E_1) : \intr(E_1)\to J - \{v\}$ and $f|\intr(E_3) : \intr(E_3)\to J - \{v\}$ are orientation preserving homeomorphisms while $f|\intr(E_2) : \intr(E_2)\to J - \{v\}$ and $f|\intr(E_4) : \intr(E_4)\to J - \{v\}$ are orientation reversing homeomorphisms. Let $L$ be an arc in $\Delta$ such that $\partial L = \{ w_0, w_2 \}$ and $\intr(L) \subset \intr(\Delta).$ Let $B_1$ and $B_3$ be disjoint disks in $\Delta - L$ so that $K_1 = B_1 \cap \partial\Delta$ and $K_3 = B_3 \cap \partial\Delta$ are arcs in $\partial\Delta$ satisfying $w_1\in \intr(K_1),\ w_3\in \intr(K_3),\ K_1 \cap (E_3 \cup E_4) = \emptyset = K_3 \cap (E_1 \cup E_2)$ and $f(K_1 \cap E_1) = f(K_1 \cap E_2) = f(K_3 \cap E_3) = f(K_3 \cap E_4).$ Let $N = \cl(\Delta - (B_1 \cup B_3)).$  Then $N$ is a regular neighborhood of $L$ in $\Delta$ such that $N \cap \partial\Delta$ is the union of two disjoint arcs $K_0$ and $K_2$ such that $w_0 \in \intr(K_0),\ w_2 \in \intr(K_2),\ K_0 \cap (E_2 \cup E_3) = \emptyset = K_2 \cap (E_1 \cup E_4)$ and $f(K_0 \cap E_1) = f(K_0 \cap E_4) = f(K_2 \cap E_2) = f(K_2 \cap E_3).$ %(See Figure 4.) 
Let $A = q(N)$ and $B = q(B_1 \cup B_3).$ Then $A$ and $B$ are proper subpolyhedra of $D = A \cup B.$  $B$ is homeomorphic to a cone over a figure eight and, hence, is collapsible. There is a strong deformation retraction of $A$ onto $q(L).$ This deformation is constructed by beginning with a strong deformation retraction of the arc $f(K_0 \cap E_1)$  to its endpoint $\{v\}$, lifting this deformation to a strong deformation retraction of $N \cap \partial\Delta$ onto $\partial L$ via conjugation by $q^{-1},$ then extending the lifted deformation to a strong deformation retraction of $N$ onto $L,$ and finally pushing down the last deformation to a strong deformation retraction of $A$ onto $q(L)$ via conjugation by $q.$ Let $i : A\to D$ and $j : q(L)\to D$ denote inclusions.  Since there is a strong deformation retraction of $A$ onto $q(L),$ then there is a retraction $r : A\to q(L)$ such that $i$ is homotopic to $j\circ r$ in $D.$ We want to prove $i_* : H_1(A)\to H_1(D)$ is zero.  So it suffices to prove that $j_* : H_1(q(L))\to H_1(D)$ is zero. Let $E_\sharp = E_1 \cup E_2.$  Clearly, $q|E_\sharp = f|E_\sharp : (E_\sharp,\partial E_\sharp)\to (D,\{v\})$ is null homotopic rel $\partial E_\sharp.$ Hence, $(q|E_\sharp)_* : H_1(E_\sharp,\partial E_\sharp)\to H_1(D,\{v\})$ is zero. Let $k : L\to \Delta$ denote inclusion. Then $q\circ k = j\circ q|L.$ Clearly, $k : L\to \Delta$ is homotopic in $\Delta$ rel $\partial L$ to a map $\varphi : L\to \Delta$ such that $\varphi(L) = E_\sharp.$ Thus, $q\circ k : L\to D$ is homotopic in $D$ rel $\partial L$ to $(q|E_\sharp)\circ \varphi : L\to D.$ Thus, $(q\circ k)_* = ((q|E_\sharp)\circ \varphi)_* : H_1(L,\partial L)\to H_1(D,\{v\}).$ Since $(q|E_\sharp)_* = 0,$ it follows that $(j\circ q|L)_* = 0.$ Next, we will prove that $j_* : H_1(q(L),\{v\})\to H_1(D,\{v\})$ is zero by showing that $(q|L)_* : H_1(L,\partial L)\to H_1(q(L),\{v\})$ is an isomorphism. $q|L : (L,\partial L)\to (q(L),\{v\})$ factors as the composition of the quotient map $\pi : (L,\partial L)\to (L/\partial L,\partial L/\partial L)$ and a homeomorphism $\eta : (L/\partial L,\partial L/\partial L)\to (q(L),\{v\}).$ Proposition 2.22 on page 124 of [H] implies that $\pi$ induces an isomorphism on first homology. Clearly $\eta$ induces an isomorphism on first homology. Hence, $(q|L)_* : H_1(L,\partial L)\to H_1(q(L),\{v\})$ is an isomorphism and, therefore, $j_* : H_1(q(L),\{v\})\to H_1(D,\{v\})$ is zero.  The homology long exact sequences of the pairs $(q(L),\{v\})$ and $(D,\{v\})$ tell us that the inclusion induced homomorphisms $H_1(q(L))\to H_1(q(L),\{v\})$ and $H_1(D)\to H_1(D,\{v\})$ are isomorphisms.  Consequently, since the composition of $j_* : H_1(q(L))\to H_1(D)$ and $H_1(D)\to H_1(D,\{v\})$ equals the composition of $H_1(q(L))\to H_1(q(L),\{v\})$ and $j_* : H_1(q(L),\{v\})\to H_1(D,\{v\}),$ then it follows that $j_* : H_1(q(L))\to H_1(D)$ is zero. \hfill $\square$
\end{example}
%[Figure 4]

\section{Auxiliary facts about maps between circles}  The results of this section are elementary facts about maps between circles for which we give elementary proofs.  Our goal is Proposition 5 which will be applied at a particular point in the proof of the Theorem.

\begin{definitions}  Let $f : K\to J$ be a map between circles.  Let $x \in K.$  If $f$ restricts to an embedding on a connected open neighborhood of $x,$ then we call $x$ a \emph{regular point} of $f.$  A point of $K$ which is not a regular point of $f$ is called a \emph{singular point} of $f.$  Let $R(f)$ denote the set of all regular points of $f$, and let $S(f)$ denote the set of all singular points of $f.$  Then $K = R(f) \sqcup S(f).$  
\end{definitions}

\begin{definitions} Let $f : K\to J$ be a map between oriented circles and let $x \in R(f).$  Then there is a connected open neighborhood $N$ of $x$ in $K$ such that $f|N : N\to J$ is an embedding. $f|N : N\to J$ must either preserve or reverse orientation.  If $f|N$ preserves orientation, then we say that $f$ is \emph{increasing at} $x.$ If $f|N$ reverses orientation, then we say that $f$ is \emph{decreasing at} $x.$  Let $R_+(f) = \{ x \in K : f \text{ is increasing at } x \}$ and let $R_-(f) = \{ x \in K : f \text{ is decreasing at }x \}.$  Then $R(f) = R_+(f) \sqcup R_-(f).$  Note that reversing the orientation on $J$ preserves the set $R(f)$ and interchanges the sets $R_+(f)$ and $R_-(f).$
\end{definitions}

If $f : K\to J$ is a map between oriented circles, observe that $R(f),\ R_+(f)$ and $R_-(f)$ are open subsets of $K$ and, therefore, $S(f)$ is a closed subset of $K.$  

\begin{definition}  Let $S \subset \mathbb{R}$ and let $f : S\to \mathbb{R}$ be a function. We say that $f$ is \emph{strictly increasing} if $f(y) < f(z)$ whenever $y, z \in S$ and $y < z.$  We say that f is \emph{locally strictly increasing} if every $x \in S$ has an open neighborhood $N_x$ in $\mathbb{R}$ such that $f|N_x \cap S : N_x \cap S\to \mathbb{R}$ is strictly increasing.  The notions of \emph{strictly decreasing} and \emph{locally strictly decreasing} are defined similarly.
\end{definition}

%Aka Lemma 1 below
\begin{lemma} \label{locally strictly increasing} Let $S$ be a connected subset of $\mathbb{R}$ and let $f : S\to \mathbb{R}$ be a function.  If $f$ is locally strictly increasing, then it is strictly increasing.  Similarly, if $f$ is locally strictly decreasing, then it is strictly decreasing.  
\end{lemma}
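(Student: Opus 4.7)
The plan is to handle the strictly increasing case; the strictly decreasing case then follows by applying the first half to $-f$. The key structural fact is that a connected subset of $\mathbb{R}$ is an interval, so for any $y, z \in S$ with $y < z$, the compact interval $[y, z]$ is contained in $S$.

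Fix such a pair $y < z$ in $S$. By hypothesis, each point $x \in [y, z]$ has an open neighborhood $N_x \subset \mathbb{R}$ such that $f | N_x \cap S$ is strictly increasing. The family $\{N_x : x \in [y, z]\}$ is an open cover of the compact set $[y, z]$, so it admits a Lebesgue number $\delta > 0$. Choose a partition $y = t_0 < t_1 < \cdots < t_n = z$ of mesh less than $\delta$. Each pair $\{t_{i-1}, t_i\}$ has diameter smaller than $\delta$ and lies in $[y, z] \subset S$, so it is contained in $N_{x_i} \cap S$ for some $x_i \in [y, z]$. Local strict increase on $N_{x_i} \cap S$ then forces $f(t_{i-1}) < f(t_i)$, and chaining these inequalities gives $f(y) < f(z)$.

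An alternative, equally short route avoids the Lebesgue number by a supremum argument: assume $B = \{w \in S : w > y \text{ and } f(w) \leq f(y)\}$ is nonempty, set $z_0 = \inf B$, note $z_0 \in S$ since $S$ is an interval, and use local strict increase on a neighborhood $N$ of $z_0$ to contradict either $z_0 \in B$ (values to the left of $z_0$ in $N \cap S$ would simultaneously satisfy $f > f(y)$ and $f < f(z_0) \le f(y)$) or $z_0 \notin B$ (some point of $B$ would lie in $N$ to the right of $z_0$, again forcing $f > f(y)$ there).

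There is no real obstacle in either approach: the proof is essentially the observation that connectedness of $S$ together with compactness of $[y, z]$ lets one propagate a local property to a global one by a finite chain. The only mildly delicate point is recognizing that connectedness of $S$ is exactly what guarantees the chaining points (or the candidate infimum) lie in $S$ and not merely in $\mathbb{R}$.
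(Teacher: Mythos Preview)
Your proof is correct and follows essentially the same approach as the paper: both use that $S$ is an interval to get $[y,z]\subset S$, take a Lebesgue number of the cover by neighborhoods of local monotonicity, and chain finitely many strict inequalities across a sufficiently fine partition. The paper uses the specific partition $y, y+\lambda, y+2\lambda,\ldots$ rather than an arbitrary partition of small mesh, but this is a cosmetic difference.
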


\begin{proof}  Assume $f : S\to \mathbb{R}$ is locally strictly increasing.  Let $x, y \in S$ such that $x < y.$  Then $[x,y] \subset S$ and each $z \in S$ has an open neighborhood $N_z$ in $\mathbb{R}$ such that $f|N_z \cap S : N_z \cap S\to \mathbb{R}$ is strictly increasing. The open cover $N = \{ N_z : z \in S \}$ of $[x,y]$ has a Lebesgue number $\lambda.$  Choose $n \geq 0$ so that $x + n\lambda < y \leq x + (n+1)\lambda.$  Then for $1 \leq i \leq n,$ the two points $x + (i-1)\lambda$ and $x + i\lambda$ lie in an element of $N,$ and the two points $x + n\lambda$ and $y$ lie in an element of $N.$  Thus, $f(x) < f(x + \lambda) < f(x + 2\lambda) < \ldots < f(x + n\lambda) < f(y).$  We conclude that $f(x) < f(y).$  Thus, $f$ is strictly increasing.  
\\We leave the proof of the second assertion of Lemma \ref{locally strictly increasing} to the reader.% 
\end{proof}

%Prop 2
\begin{proposition} \label{regular open intervals}  Suppose $f : K\to J$ is a map between oriented circles, $E$ is a connected open subset of $K$ and $E \subset R(f).$  Then either $E \subset R_+(f)$ or $E \subset R_-(f).$  Furthermore, if $f(E)$ is a proper subset of $J,$ then $f|E : E\to J$ is an embedding.
\end{proposition}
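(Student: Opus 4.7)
For the dichotomy $E \subset R_+(f)$ or $E \subset R_-(f)$, the key input is the observation made just before Lemma~\ref{locally strictly increasing} that $R_+(f)$ and $R_-(f)$ are open subsets of $K$.  They are disjoint and cover $E$, so $E \cap R_+(f)$ and $E \cap R_-(f)$ form a partition of $E$ into two relatively open subsets.  Connectedness of $E$ forces one of them to be empty.

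For the embedding assertion, assume $f(E)$ is a proper subset of $J$.  First I would rule out the possibility $E = K$: if $E = K$ then $K = R(f)$, so $f$ is a local homeomorphism from the compact 1-manifold $K$ to $J$, hence a covering map of $S^1$ by $S^1$, hence surjective---contradicting $f(E) \neq J$.  Thus $E$ is a connected open \emph{proper} subset of $K$, i.e., an open arc.  Replacing the orientation on $J$ if necessary, the dichotomy allows me to assume $E \subset R_+(f)$.

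Now I pick a point $p \in J - f(E)$ and identify both $E$ and $J - \{p\}$ with $\mathbb{R}$ via orientation-preserving homeomorphisms, so that $f|E$ corresponds to a continuous function $g : \mathbb{R} \to \mathbb{R}$.  I then verify that $g$ is locally strictly increasing: given $x \in E$, the definition of $R_+(f)$ supplies a connected open neighborhood $N \subset E$ of $x$ on which $f|N$ is an orientation-preserving embedding into $J$; since $f(N) \subset J - \{p\}$, under the chosen identifications this restriction becomes an orientation-preserving embedding of an open interval of $\mathbb{R}$ into $\mathbb{R}$, which is necessarily strictly increasing.  Lemma~\ref{locally strictly increasing} then upgrades this to global strict monotonicity of $g$, so $g$---and hence $f|E$---is a homeomorphism onto its image, i.e., an embedding.

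The only step requiring real care is the orientation bookkeeping in the last paragraph: one must confirm that the two orientation-preserving identifications with $\mathbb{R}$ genuinely convert ``orientation-preserving embedding into $J$'' into ``strictly increasing as a function to $\mathbb{R}$.''  This is where it is essential that $f(E)$ misses at least one point of $J$, since only then can we cut $J$ open to $\mathbb{R}$ with a compatible orientation and legitimately apply Lemma~\ref{locally strictly increasing}.
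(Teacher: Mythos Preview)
Your proof is correct and follows essentially the same route as the paper's: the connectedness argument for the dichotomy is identical, and for the embedding claim both you and the paper parametrize $E$ and a slit copy of $J$ by intervals, observe that $E \subset R_+(f)$ makes the induced real-valued function locally strictly increasing, and invoke Lemma~\ref{locally strictly increasing}. The only cosmetic difference is your exclusion of the case $E = K$ via a covering-map observation, whereas the paper argues that $f(E)$ would then be both open and closed in $J$; these come to the same thing.
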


\begin{proof} Since the sets $E \cap R_+(f)$ and $E \cap R_-(f)$ are disjoint relatively open subsets of $E$ whose union is $E$ and $E$ is connected, then one of these sets must be empty.  The first conclusion of the proposition follows. 

Assume $f(E)$ is a proper subset of $J.$  Notice that since $f$ is regular at each point of $E,$ then $f|E : E\to J$ is an open map.  Hence, $f(E)$ is an open subset of $J.$  If $E = K,$ then $f(E)$ is a closed subset of $J.$  Since $J$ is connected, this would imply $f(E) = J$ which contradicts our hypothesis. We conclude that $E$ is a proper subset of $K.$ Hence, there is an orientation preserving homeomorphism $\varphi : (0,1)\to E$ and there is an orientation preserving embedding $\psi : (0,1)\to J$ such that $f(E) \subset \psi((0,1)).$  First assume $E \subset R_+(f).$ Then the composition $\psi^{-1}\circ f\circ \varphi : (0,1)\to (0,1)$ is locally strictly increasing. Lemma \ref{locally strictly increasing} now implies that $\psi^{-1}\circ f\circ \varphi$ is strictly increasing. Thus, $\psi^{-1}\circ f\circ \varphi|(0,1) : (0,1)\to (0,1)$ is injective.  Since $\varphi : (0,1)\to E$ is bijective, then $f|E : E\to J$ must be injective. Hence, $f|E : E\to J$ is an embedding. In the case that $E \subset R_-(f),$ reverse the orientation on $J.$  Now $E \subset R_+(f)$ and the preceding argument shows that $f|E : E\to J$ is an embedding. 
\end{proof}

\begin{definitions} Suppose $f : K\to J$ is a map between oriented circles, $x \in K$ and $x$ is an isolated point of $S(f).$  Then there is a connected open neighborhood $E$ of $x$ in K such that $E - \{x\} \subset R(f)$ and $E \neq K.$  Hence, there is an orientation preserving homeomorphism $\varphi : (-1,1)\to E$ such that $f(0) = x.$  Then $E_- = \varphi((-1,0))$ and $E_+ = \varphi((0,1))$ are the two components of $E - \{x\}.$  Proposition \ref{regular open intervals} implies that either $E_- \subset R_+(f)$ or $E_- \subset R_-(f),$ and either $E_+ \subset R_+(f)$ or $E_+ \subset R_-(f).$  If $E_- \subset R_+(f)$ and $E_+ \subset R_-(f),$ we say $f$ \emph{has a local maximum at} $x;$ and if $E_- \subset R_-(f)$ and $E_+ \subset R_+(f),$ we say f \emph{has a local minimum at} $x.$  The following proposition shows that the remaining two possibilities - either $E_- \cup E_+ \subset R_+(f)$ or $E_- \cup E_+ \subset R_-(f)$ - can't occur if $x \in S(f).$
\end{definitions}

%prop 3
\begin{proposition} \label{regular pts} If $f : K\to J$ is a map between oriented circles, $x \in K$ and $E$ is an open neighborhood of $x$ in $K$ such that either $E - \{x\} \subset R_+(f)$ or $E - \{x\} \subset R_-(f),$ then $x \in R(f).$
\end{proposition}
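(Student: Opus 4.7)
The plan is to reduce to the case $E-\{x\} \subset R_+(f)$ (by reversing the orientation on $J$ if necessary, which swaps $R_+(f)$ and $R_-(f)$) and then to show that after shrinking $E$ appropriately, $f|E$ is itself an embedding, which forces $x \in R(f)$.

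First I would shrink $E$ to a smaller connected open neighborhood of $x$ (still satisfying the hypothesis) that is a proper subset of $K$, and such that $f(E)$ is a proper subset of $J$; the latter is available by continuity, since picking any $y \in J-\{f(x)\}$ and using continuity of $f$ at $x$ gives an open neighborhood on which $y$ is not hit. Then I would choose an orientation-preserving homeomorphism $\varphi: (-1,1) \to E$ with $\varphi(0) = x$, and an orientation-preserving embedding $\psi: (-1,1) \to J$ with $f(E) \subset \psi((-1,1))$, and set $g = \psi^{-1} \circ f \circ \varphi : (-1,1) \to (-1,1)$, a continuous function.

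Next, using the decomposition $E-\{x\} = E_- \sqcup E_+$ with $E_\pm \subset R_+(f)$, I would apply Proposition \ref{regular open intervals} to conclude that $f|E_\pm$ is an embedding, and then unwind the definitions to see that $g$ is locally strictly increasing on each of $(-1,0)$ and $(0,1)$. Lemma \ref{locally strictly increasing} upgrades this to strict monotonicity on each of the two intervals. The crucial remaining step is to bridge $0$: I would observe that by the strict increase on $(-1,0)$, the one-sided limit $\lim_{t\to 0^-}g(t)$ exists and equals $\sup_{t<0} g(t)$, and by continuity this common value is $g(0)$; hence $g(t) < g(0)$ for every $t \in (-1,0)$. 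The symmetric argument on $(0,1)$ shows $g(t) > g(0)$ for $t \in (0,1)$. Combining these with strict monotonicity on each half shows $g$ is strictly increasing on all of $(-1,1)$, hence injective and continuous on an interval, hence an embedding. Therefore $f|E = \psi \circ g \circ \varphi^{-1}$ is an embedding and $x \in R(f)$.

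The main obstacle is precisely the gluing across the singular candidate $x$ in the preceding paragraph: the hypothesis only gives regularity on the punctured neighborhood, and one needs strict (not just weak) inequality across $0$ to rule out a ``plateau'' at $g(0)$ that would obstruct injectivity. The observation that the one-sided limits of a strictly increasing function are strict suprema/infima (combined with continuity forcing them to equal $g(0)$) resolves this cleanly. The remaining case $E-\{x\}\subset R_-(f)$ follows by reversing the orientation on $J$, which leaves $R(f)$ invariant and swaps $R_+(f) \leftrightarrow R_-(f)$, so the argument above applies verbatim.
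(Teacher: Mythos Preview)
Your proposal is correct and follows essentially the same approach as the paper: reduce to the $R_+$ case, transfer via charts to a continuous $g:(-1,1)\to(-1,1)$ that is strictly increasing on each half-interval, bridge across $0$ using continuity to obtain global injectivity, and handle the $R_-$ case by reversing the orientation on $J$. The only cosmetic difference is in the bridging step, where the paper normalizes $g(0)=0$ and argues by contradiction that $g((-1,0))\subset(-1,0)$, while you reach the same conclusion directly via one-sided limits and suprema.
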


%check this paragraph
\begin{proof}  First assume $E - \{x\} \subset R_+(f).$  Let $\psi : (-1,1)\to J$ be an orientation preserving embedding such that $\psi(0) = f(x).$   There is an orientation preserving embedding $\varphi : (-1,1)\to K$ such that $\varphi(0) = x$ and $\varphi((-1,1)) \subset E.$  Thus, $\varphi((-1,1) - \{0\}) \subset E - \{x\}.$  Since $f$ is continuous, we can assume $f\circ \varphi((-1,1)) \subset \psi((-1,1)).$  Let $g = \psi^{-1}\circ f\circ \varphi : (-1,1)\to (-1,1).$   Since $E - \{x\} \subset R_+(f),$ then $g|(-1,0) : (-1,0)\to (-1,1)$ and $g|(0,1) : (-1,0)\to (-1,1)$ are orientation preserving maps.  Therefore, $g|(-1,0)$ and $g|(0,1)$ are locally strictly increasing and, hence, strictly increasing functions by Lemma \ref{locally strictly increasing}.  So $g|(-1,0)$ and $g|(0,1)$ are injective.  We assert that $g((-1,0)) \subset (-1,0).$  For suppose there is a $t \in (-1,0)$ such that $g(t) \geq 0.$  Choose $t' \in (t,0).$  Then $0 \leq g(t) < g(t') < 1.$  Hence, $g([t',0)) \subset [g(t'),1] \subset (0,1] .$  Since $g$ is continuous at $0,$ then $g(t'')$ approaches $g(0) = 0$ as $t'' \in [t',0)$ approaches 0 from below.  This implies $0 \in \cl(g[t',0)) \subset [g(t'),1],$ a contradiction.  A similar argument shows $g((0,1)) \subset (0,1).$  It follows that $g : (-1,1)\to (-1,1)$ is injective.  Since $g = \psi^{-1}\circ f\circ \varphi$ and $\varphi : (-1,1)\to K$ is injective, then $f|\varphi((-1,1))$ is injective.  Consequently, $x \in R(f).$  In the case that $E - \{x\} \subset R_-(f),$ reverse the orientation of $J.$  Then the preceding argument shows that $x \in R(f).$% 
\end{proof}

If $f : K\to J$ is a map between oriented circles and $x \in K$ has a connected open neighborhood $E$ in $K$ such that $E - \{x\} \subset R(f),$ then there are three possible cases: $E - \{x\} \subset R_+(f),$ $E - \{x\} \subset R_-(f),$ or one component of $E - \{x\}$ lies in $R_+(f)$ while the other component lies in $R_-(f).$  Proposition \ref{regular pts} implies that in either of the first two cases, $x \in R(f).$  Thus, if $x \in S(f),$ then the third case must hold.  Hence, we have:

%Corollary 4.
\begin{corollary}  If $f : K\to J$ is a map between oriented circles, $x \in K$ and $x$ is an isolated point of $S(f),$ then $f$ has either a local maximum or a local minimum at $x.$ %
\end{corollary}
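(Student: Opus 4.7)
The plan is to combine Proposition \ref{regular open intervals} with Proposition \ref{regular pts} to eliminate all but the two desired possibilities. Since $x$ is an isolated point of $S(f)$, by the remarks immediately preceding the statement of the corollary there is a connected open neighborhood $E$ of $x$ in $K$ with $E \neq K$ and $E - \{x\} \subset R(f)$, and the two components $E_-$ and $E_+$ of $E - \{x\}$ are themselves connected open subsets of $K$ contained in $R(f)$.

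First I would apply Proposition \ref{regular open intervals} separately to $E_-$ and to $E_+$. This yields four possible cases: either both components lie in $R_+(f)$, both lie in $R_-(f)$, or one lies in $R_+(f)$ and the other in $R_-(f)$. The latter two configurations (one component in each of $R_+(f)$ and $R_-(f)$) correspond precisely to the definitions of a local maximum and a local minimum at $x$, so it suffices to rule out the two cases in which both components lie in the same one of $R_+(f)$ or $R_-(f)$.

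To eliminate these two remaining cases, I would invoke Proposition \ref{regular pts} directly. If $E_- \cup E_+ \subset R_+(f)$, then $E - \{x\} = E_- \cup E_+ \subset R_+(f)$, and Proposition \ref{regular pts} forces $x \in R(f)$, contradicting the assumption $x \in S(f)$. The same contradiction arises if $E_- \cup E_+ \subset R_-(f)$. Hence only the local maximum and local minimum configurations survive, proving the corollary.

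I do not anticipate any real obstacle here: the corollary is essentially an exhaustive case analysis that has been carefully pre-arranged by the statements and definitions on the preceding page. The only point requiring a bit of care is to make sure that the neighborhood $E$ is chosen small enough that $E - \{x\} \subset R(f)$ and $E \neq K$, so that Proposition \ref{regular open intervals} can legitimately be applied to each component $E_\pm$; but this is immediate from the hypothesis that $x$ is an \emph{isolated} point of $S(f)$ together with the observation that any sufficiently small connected open neighborhood of a point in a circle is a proper subset.
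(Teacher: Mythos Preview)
Your proposal is correct and follows essentially the same route as the paper: the paper's argument (given in the paragraph immediately preceding the corollary rather than in a separate proof) likewise uses Proposition~\ref{regular open intervals} to reduce to three cases and then invokes Proposition~\ref{regular pts} to rule out the two in which both components of $E-\{x\}$ lie in the same one of $R_+(f)$ or $R_-(f)$.
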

	We have reached the fact about maps between circles which is the goal of this section.

%Proposition 5.
\begin{proposition} \label{maxs = mins} Suppose $f : K\to J$ is a map between oriented circles such that every singular point of $f$ is isolated.  Then $f$ has only finitely many singular points and the number of points at which $f$ has a local maximum is equal to the number of points at which $f$ has a local minimum.
\end{proposition}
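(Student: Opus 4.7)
The plan is to first establish finiteness of $S(f)$ and then use a parity/alternation argument on the arcs of $K - S(f)$ to match local maxima with local minima.

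First I would handle finiteness. Since $S(f)$ is a closed subset of the compact space $K$, it is compact. The hypothesis says every point of $S(f)$ is isolated, so $S(f)$ is a compact space all of whose points are isolated, hence discrete, hence finite. This also disposes of the degenerate case $S(f) = \emptyset$: then by Proposition \ref{regular open intervals} applied to $E = K$ (which is connected), $K$ lies entirely in $R_+(f)$ or entirely in $R_-(f)$, so there are no local maxima or minima and both counts are $0$.

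Next I would analyze the structure when $S(f) \neq \emptyset$. Write $S(f) = \{x_1,\ldots,x_n\}$ listed in cyclic order around $K$, and let $A_i$ be the open arc in $K - S(f)$ running from $x_i$ to $x_{i+1}$ (indices mod $n$). Each $A_i$ is connected and contained in $R(f)$, so by Proposition \ref{regular open intervals} exactly one of $A_i \subset R_+(f)$ or $A_i \subset R_-(f)$ holds. Label each arc $A_i$ with the corresponding sign $+$ or $-$. At the singular point $x_i$, the preceding arc is $A_i$ and the following arc is $A_{i+1}$: if these have signs $(+,-)$ then $x_i$ is a local maximum by the definition; if $(-,+)$ then $x_i$ is a local minimum. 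The remaining cases $(+,+)$ and $(-,-)$ are excluded, because in each of those cases $x_i$ would have a punctured open neighborhood lying entirely in $R_+(f)$ or entirely in $R_-(f)$, forcing $x_i \in R(f)$ by Proposition \ref{regular pts} and contradicting $x_i \in S(f)$.

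Therefore the signs of consecutive arcs must alternate around the circle, which forces $n$ to be even. In such an alternating cyclic sequence, the number of sign changes $+ \to -$ equals the number of sign changes $- \to +$ (each equals $n/2$). Translating back, the number of $x_i$ at which $f$ has a local maximum equals the number at which $f$ has a local minimum, completing the proof. The only subtlety I expect is making sure the orientation conventions match up so that ``preceding'' and ``following'' in the definition of local max/min agree with the cyclic ordering used to index the arcs; this is routine bookkeeping.
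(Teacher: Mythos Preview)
Your proposal is correct and follows essentially the same approach as the paper: finiteness from compactness plus discreteness, then Proposition~\ref{regular open intervals} to label each complementary arc as $+$ or $-$, Proposition~\ref{regular pts} to force consecutive arcs to have opposite signs, and the resulting alternation to pair maxima with minima. There is a harmless off-by-one slip in your indexing (with $A_i$ running from $x_i$ to $x_{i+1}$, the arcs adjacent to $x_i$ are $A_{i-1}$ and $A_i$, not $A_i$ and $A_{i+1}$), but as you note this is routine bookkeeping and does not affect the argument.
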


%Proof. 
\begin{proof} Since $K$ is compact and $S(f)$ is a closed subset of $K,$ then $S(f)$ is compact.  Since each point of $S(f)$ is isolated, it follows that $S(f)$ is a finite set.  Suppose $S(f) =  \{ w_1, w_2, \ldots , w_p \}$ indexed in orientation preserving cyclic order on $K.$  Let $w_0 = w_p$ and for $1 \leq i \leq p,$ let $E_i$ be the component of $K - S(f) = R(f)$ with endpoints $w_{i-1}$ and $w_i.$  Let $E_0 = E_p$ and $E_{p+1} = E_1.$  Observe that for $1 \leq i \leq p+1,$ Proposition \ref{regular open intervals} implies that either $E_i \subset R_+(f)$ or $E_i \subset R_-(f),$ and Proposition \ref{regular pts} implies that $E_{i-1}$ and $E_i$ can't both lie in either $R_+(f)$ or in $R_-(f).$  If follows that $p$ must be an even integer and we can assume that $E_1 \cup E_3 \cup \ldots \cup E_{p-1} \subset R_+(f)$ and $E_2 \cup E_4 \cup \ldots \cup E_p \subset R_-(f).$  Consequently, $f$ must have a local maximum at each point of the set $\{ w_1, w_3, \ldots , w_{p-1} \}$ and it must have a local minimum at each point of the set $\{ w_2, w_4, \ldots , w_p \}.$  Since these two sets have the same number of elements, the proposition is proved. %
\end{proof}	
%
	%4. 

\section{Auxiliary facts about components of submanifolds} \label{Aux facts} Here we list some facts that will be used without specific citation in the proof of the Theorem.

%Notation. 
\begin{notation} We need notations that distinguish between the two types of \emph{interiors} that can be associated with a manifold that is the subset of a larger topological space.  If $A$ is a subset of a topological space $X,$ let $cl_X(A),$ $intr_X(A)$ and $fr_X(A)$ denote the \emph{closure}, \emph{interior} and \emph{frontier} of $A$ in $X,$ respectively.  Omit the subscript $X$ in contexts where no ambiguity can arise from this omission.  If $M$ is a topological manifold, then from this point on, let $in(M)$ and $\partial M$ denote the manifold interior and the manifold boundary of $M,$ respectively.
\end{notation}

	Recall that if $M \subset Q$ are manifolds of the same dimension, then \emph{Invariance of Domain} implies that $\inr(M) \subset \inr(Q).$  Hence, in this situation, $M \cap \partial Q = \partial M \cap \partial Q.$

\begin{definition}  If $M \subset Q$ are manifolds and $\dim(M) = n,$ we call $M$ a \emph{nice submanifold} of $Q$ if $\inr(M) \subset \inr(Q)$ and if $\partial M \cap \partial Q$ and $\partial M - \inr(\partial M \cap \partial Q)$ are ($n - 1$)-manifolds.
\end{definition}
	Suppose $T$ is a combinatorial triangulation of an $n$-manifold $M$ (i.e., the link of every vertex of $T$ is piecewise linearly homeomorphic to either a piecewise linear ($n - 1$)-sphere or a piecewise linear ($n - 1$)-ball.)  Let $K$ be a subcomplex of $T.$  Then regular neighborhood theory (%\cite{}
	[Br], Theorem 3.6, page 231) implies that the simplicial neighborhood of $K$ in a second derived subdivision $T''$ of $T$ is an $n$-dimensional nice submanifold of $M.$

	%Notation.
\begin{notation}If $X$ is a topological space, let \emph{Comp}$(X)$ denote the set of all components of $X.$
\end{notation}
	%Proposition 6.
\begin{proposition} \label{inverses} Suppose $M \subset Q$ are manifolds such that $\inr(M) \subset \inr(Q).$  Then the functions $C \mapsto  \cl_M(C) : \comp(M \cap \inr(Q))\to \comp(M)$ and $D \mapsto D \cap \inr(Q) : \comp(M)\to \comp(M \cap \inr(Q))$ are inverse bijections.
\end{proposition}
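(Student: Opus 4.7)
The plan is to verify that both maps are well-defined and then show they compose to the identity in either order. The key geometric input is that for any component $D$ of $M$, the manifold interior $\inr(D)$ is connected, is dense in $D$, and is contained in $D \cap \inr(Q)$; once these are in hand, everything reduces to a sandwich argument.

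First I would handle the second map $D \mapsto D \cap \inr(Q)$. Since $M$ is a manifold, it is locally connected, so each component $D$ is simultaneously open and closed in $M$. Openness of $D$ in $M$ gives $\inr(D) = D \cap \inr(M)$, and combined with the hypothesis $\inr(M) \subset \inr(Q)$ this yields $\inr(D) \subset D \cap \inr(Q) \subset D$. Now $\inr(D)$ is connected (a standard fact for a connected manifold, proved via a collar neighborhood of $\partial D$) and dense in $D$, so by the sandwich principle that any set containing a connected set and contained in its closure is connected, $D \cap \inr(Q)$ is connected. It is open in $M \cap \inr(Q)$ since $D$ is open in $M$, and closed in $M \cap \inr(Q)$ since its complement $(M-D) \cap \inr(Q)$ is open (as $M - D$ is a union of the remaining components of $M$, hence open). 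Thus $D \cap \inr(Q)$ is a nonempty clopen connected subset, i.e.\ a component of $M \cap \inr(Q)$.

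Next I would handle the first map and the two compositions simultaneously. Given $C \in \comp(M \cap \inr(Q))$, let $D$ be the component of $M$ containing $C$. By the previous step, $D \cap \inr(Q)$ is a component of $M \cap \inr(Q)$; since it contains $C$ and $C$ is already a component, we must have $C = D \cap \inr(Q)$. Taking $M$-closures, and using that $\inr(D) \subset C \subset D$ with $\inr(D)$ dense in $D$ and $D$ closed in $M$, I obtain $D = \cl_M(\inr(D)) \subset \cl_M(C) \subset \cl_M(D) = D$, so $\cl_M(C) = D$. This simultaneously shows the first map is well-defined and that $C \mapsto \cl_M(C) \mapsto \cl_M(C) \cap \inr(Q)$ returns $D \cap \inr(Q) = C$. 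Conversely, starting from $D \in \comp(M)$, its image $D \cap \inr(Q)$ is some component $C$, whose $M$-closure is $D$ by the same chain of equalities, so the other composition is also the identity.

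The main obstacle is not a deep one but rather the careful bookkeeping: one must repeatedly invoke that components of the locally connected space $M$ are clopen, and justify $\inr(D) \subset \inr(Q)$ via both the openness of $D$ in $M$ and the hypothesis $\inr(M) \subset \inr(Q)$. The one step with genuine geometric content is that $\inr(D)$ is connected and dense in the connected manifold $D$, which is standard but deserves at least a passing reference to the collar neighborhood theorem. Once that is accepted, the bijection claim is entirely a formal consequence of the sandwich argument above.
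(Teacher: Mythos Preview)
Your proposal is correct and follows essentially the same approach as the paper's proof: both arguments hinge on the chain $\inr(D) \subset D \cap \inr(Q) \subset D$ together with the density and connectedness of $\inr(D)$ in the connected manifold $D$, and both deduce $\cl_M(C) = D$ from the resulting sandwich $\inr(D) \subset C \subset D$. The only cosmetic differences are that you treat the map $D \mapsto D \cap \inr(Q)$ first and invoke the collar neighborhood theorem for the connectedness of $\inr(D)$, whereas the paper treats the map $C \mapsto \cl_M(C)$ first and obtains connectedness of $D \cap \inr(Q)$ by noting that any two points of $D$ can be joined by an arc whose interior lies in $\inr(D)$.
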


\begin{proof}  It suffices to prove that these two functions are well defined and that each is a left inverse of the other.

First we prove that the function $C \mapsto \cl_M(C) : \comp(M \cap \inr(Q))\to \comp(M)$ is well defined, and that the function $D \mapsto D \cap \inr(Q)$ is a left inverse of $C \mapsto \cl_M(C).$  Let $C \in \comp(M \cap \inr(Q)).$  Then there is a $D \in \comp(M)$ such that $C \subset D.$  Therefore, $C \subset D \cap \inr(Q) \subset M \cap \inr(Q).$  $D$ is a relatively closed subset of $M$ and (because $M$ is locally connected) a relatively open subset of $M.$  Hence, $D$ is a manifold with the same dimension as $M.$  We draw two conclusions.  First, $\inr(D)$ is a dense subset of $D$ and, hence, $\cl_M(\inr(D)) = D.$  Second: $\inr(D) \subset \inr(M).$  The latter statement implies $\inr(D) \subset D \cap \inr(M) \subset D \cap \inr(Q).$  Since any two points of $D$ can be joined by an arc whose interior lies in $\inr(D),$ then it follows that $D \cap \inr(Q)$ is a connected set.  Now the inclusions $C \subset D \cap \inr(Q) \subset M \cap \inr(Q)$ together with the facts that $C$ is a component of M $\cap \inr(Q)$ and $D \cap \inr(Q)$ is connected imply that $C = D \cap \inr(Q).$  Since $\inr(D) \subset D$ and $\inr(D) \subset \inr(M) \subset \inr(Q),$ then $\inr(D) \subset C \subset D.$  Thus, $D = \cl_M(\inr(D)) \subset \cl_M(C) \subset \cl_M(D) = D.$  Hence, $\cl_M(C) = D \in \comp(M)$ and $\cl_M(C) \cap \inr(Q) = D \cap \inr(Q) = C.$

	Second we prove that the function $D \mapsto D \cap \inr(Q) : \comp(M)\to \comp(M \cap \inr(Q))$ is well defined, and that the function $C \mapsto \cl_M(C)$ is a left inverse of $D \mapsto D \cap \inr(Q).$  Let $D \in \comp(M).$  Then, as noted in the previous paragraph, $D$ is both a relatively closed and a relatively open subset of $M.$  Hence, $D \cap \inr(Q)$ is both a relatively closed and a relatively open subset of $M \cap \inr(Q).$  It was also noted in the previous paragraph that $D \cap \inr(Q)$ is connected.  It follows that $D \cap \inr(Q) \in \comp(M \cap \inr(Q)).$  The argument in the previous paragraph, shows that the closure in $M$ of an element of $\comp(M \cap \inr(Q))$ is the component of $M$ that contains it.  Since $D$ is the component of $M$ that contains $D \cap \inr(Q),$ then we conclude $\cl_M(D \cap \inr(Q)) = D.$ %
\end{proof}

	%5. Proof of the Theorem.
\section{Proof of the Theorem} Assume that a generalized dunce hat $D$ is the union of two proper subpolyhedra $A$ and $B$ each of which has a finite first homology group.  To guide the reader through the upcoming argument, we briefly describe the eleven steps
that comprise it.
%\begin{itemize}
\begin{enumerate}[label=(\roman*)]
	\item Make $A$ connected by adding arcs.
	\item Prove $B$ does not contain $J$ and choose a point $b \in J - B.$
	\item Replace $A$ by a regular neighborhood, making $A \cap J$ a 1-dimensional submanifold of $J,$ $A - J$ a 2-dimensional submanifold of $D - J$ and $q^{-1}(A)$ a nice
2-dimensional submanifold of $\Delta.$
	\item Prove that each component of $q^{-1}(A)$ is a disk.
	\item Let $\mcal{E}$ denote the set of components of $A \cap J$ and let $\mcal{F}$ denote the set of components of $A - J.$  Construct a graph $S$ in $A$ with a vertex $w_E \in \inr(E)$ for each $E \in \mcal{E},$ a vertex $w_F \in \inr(F)$ for each $F \in \mcal{F},$ and an edge joining $w_E$ to $w_F$ if and only if $q^{-1}(w_E) \cap \cl(q^{-1}(F)) \neq \emptyset.$  ($S$ is essentially the nerve of the cover $\{ N(E) : E \in \mcal{E} \} \cup F$ where each $N(E)$ is a small regular neighborhood of $E$ in $A.$)  
\item Prove that $A$ strong deformation retracts onto $S,$ and conclude that $S$ is a tree.
\item Show that for each $F \in \mcal{F},$ $\cl(F)$ is a disk.
\item $A$ is the union of $\{ \cl(F) : F \in \mcal{F} \}$ and the intersection pattern of this collection of disks is encoded by the tree $S.$  Think of $A$ as a \emph{tree of disks}.
\item Prove Lemma \ref{mono on homology} which says that if a disk $G'$ in $D$ intersects $J$ in an arc $E'$ such that $v \notin G',$ $\partial E' \subset \partial G'$ and $b \in \inr(E') \subset \inr(G'),$ then the inclusion map $\partial G'\to D - \{b\}$ induces a monomorphism on first homology.
\item There is an $E \in \mcal{E}$ such that $b \in \inr(E).$  The remainder of the proof breaks into two cases: either there are two distinct elements $F_1$ and $F_2$ of $\mcal{F}$ such that $E \subset \cl(F_1)$ and $E \subset \cl(F_2),$ or there are not.  (The second case occurs only if $v \in \inr(E).$)
\item In the first case, we can find two subtrees $T_1$ and $T_2$ of $S$ such that $G_1 = \cup \{ \cl(F) : F \in \mcal{F}\text{ and }w_F\text{ is a vertex of }T_1 \}$ and $G_2 = \cup \{ \cl(F) : F \in \mcal{F}\text{ and }w_F$ is a vertex of $T_2 \}$ are disks with the property that $G_1 \cap G_2 = \partial G_1 \cap \partial G_2 = E$ and $(\partial G_1 - in(E)) \cup (\partial G_2 - \inr(E)) \subset \fr(A).$   Then $G = G_1 \cup G_2$ is a disk containing $b$ in its interior such that $\partial G \subset \fr(A) \subset B.$  It follows that the inclusion induced map $H_1(\partial G)\to H_1(D - \{b\})$ factors through a finite group.  The latter statement contradicts the conclusion of Lemma \ref{mono on homology} if we choose $G'$ to be a disk in $\inr(G)$ satisfying the hypotheses of Lemma \ref{mono on homology}.
\item In the second case, we can find a single subtree $T_1$ of $S$ such that $G_1 = \cup \{ \cl(F) : F \in \mcal{F}\text{ and }w_F\text{ is a vertex of }T_1 \}$ is a disk containing $b$ in its interior with the property that $\partial G_1 \subset \fr(A) \subset B.$  As in the previous case, it follows that the inclusion induced map $H_1(\partial  G)\to H_1(D - \{b\})$ factors through a finite group.  Again the latter statement contradicts the conclusion of Lemma \ref{mono on homology} if we choose a disk $G'$ in $\inr(G)$ which satisfies the hypotheses of Lemma \ref{mono on homology}.
\end{enumerate}
	If you understand these eleven steps completely, read no further.  For everyone else, we provide a detailed proof.

We do not initially suppose that $A$ is connected.  We can make $A$ connected by joining distinct components with arcs (one fewer arcs than the number of components).  According to the reduced Mayer-Vietoris sequence, this process doesn't alter $A$'s first homology.  So we can assume $A$ is connected. 

Neither $A$ nor $B$ can contain $J.$  For suppose $B$ contains $J.$  Choose a point $x \in D - B.$  Then $x \in D - J.$  There is a point $y \in \inr(\Delta)$ such that $q ^{-1}(\{x\}) = \{y\}.$  Any retraction of $\Delta - \{y\}$ onto $\partial \Delta$ induces a retraction of $D - \{x\}$ onto $J.$  This retraction restricts to a retraction $r$ of $B$ onto $J.$  Then $r\circ i = \id_J$ where $i : J\to B$ denotes the inclusion.    Hence, the identity map $H_1(J)\to H_1(J)$ factors through the finite group $H_1(B).$  Since $H_1(J) \approx Z,$ this is impossible.  Thus, $B$ can't contain $J.$  $A$ can't contain $J$ for similar reasons.  

Because $B$ doesn't contain $J,$ then we can choose a point $b \in J - B$ such that $b \neq v.$  Since $A \cup B = D,$ then necessarily $b \in A.$

Let $T$ be a triangulation of $D$ so that subcomplexes of $T$ triangulate $\{v\},\ A,\ B$ and $\{b\}$ and so that there is a triangulation $q^{-1}(T)$ of $\Delta$ that makes $q : \Delta\to D$ a simplicial map from $q^{-1}(T)$ to $T.$  Let $T''$ be a second derived subdivision of $T.$  Then there is a second derived subdivision $q^{-1}(T'')$ of $q^{-1}(T)$ so that $q : \Delta\to D$ is simplicial from $q^{-1}(T'')$ to $T''.$  We enlarge $A$ by replacing it by its simplicial neighborhood with respect to $T''.$   This process replaces $q ^{-1}(A)$ by its simplicial neighborhood with respect to $q^{-1}(T'').$  Then according to the remark following the definition of \emph{nice submanifold} in section \ref{Aux facts}, $A \cap J$ is now a 1-dimensional submanifold of $J$ and $q^{-1}(A)$ is now a nice 2-dimensional submanifold of $\Delta.$ Thus, $q^{-1}(A) \cap \partial \Delta$ is a 1-dimensional submanifold of $\partial \Delta,$ and $q^{-1}(A) \cap \inr(\Delta)$ is a 2-dimensional submanifold of $\inr(\Delta).$  Since $q$ maps $\inr(\Delta)$ homeomorphically onto $D - J,$ then it follows that $A - J$ a 2-dimension submanifold of $D - J.$ 

We devote the next three paragraphs to describing the topology of $A$ at points of $A \cap J.$  Let $\mcal{E}$ denote the set of components of $A \cap J.$  Since $A \cap J$ is a 1-dimensional submanifold of $J,$ then each element of $\mcal{E}$ is an arc in $J.$  Let $E \in \mcal{E}.$  Since $v$ is a vertex of the triangulation $T$ of $D$ and $E$ arises by taking a simplicial neighborhood in a second derived subdivision $T''$ of $T,$ then either $v \in \inr(E)$ or $v \notin E.$  Similarly, either $b \in \inr(E)$ or $b \notin E.$  Recall that associated to the map $f = q|\partial \Delta : \partial \Delta\to J$ is an integer $p \geq 3$ such that $q^{-1}(\{v\})$ has exactly $p$ elements, and $f$ maps each of the $p$ components of $\partial \Delta - q^{-1}(\{v\})$ homeomorphically onto $J - \{v\}.$  It follows that $q^{-1}(E)$ is the union of $p$ disjoint arcs $E_1,\ \ldots\ ,\ E_p$ in $\partial \Delta.$  

First focus on the case in which $v \notin E.$  In this situation $q$ maps each $E_i$ homeomorphically onto $E.$  Let $N$ be a regular neighborhood of $E$ in $A.$  Then $q^{-1}(N)$ is a regular neighborhood of $q^{-1}(E)$ in $q^{-1}(A).$  Since $q^{-1}(A)$ is a nice 2-dimensional submanifold of $\Delta,$ it follows that $q^{-1}(N)$ is the union of $p$ disjoint disks $N_1,\ \ldots\ ,\ N_p$ with $N_i \cap \partial \Delta = E_i$ for $1 \leq i \leq p.$  Furthermore, $q$ embeds each $N_i$ in $N$ such that $q(N_i) \cap q(N_j) = E$ for $i \neq j.$  Thus, $N$ is an \emph{open book} with \emph{pages} $q(N_1),\ \ldots\ ,\ q(N_p)$ and binding $E.$  
%above is checked

Next consider the case in which $v \in \inr(E).$  Then for $1 \leq i \leq p,$ there is a point $v_i \in \inr(E_i)$ such that $q^{-1}(\{v\}) \cap E_i  = \{v_i\}.$  The set of singular points of the map $q|\partial \Delta : \partial \Delta\to J$ lies in the $p$-element set $q^{-1}(\{v\}).$  If we orient $\partial \Delta$ and $J,$ then Proposition \ref{maxs = mins} implies that the number of points at which the $q|\partial \Delta : \partial \Delta\to J$ has a local maximum equals the number of points at which it has a local minimum.  Hence, if we let $E'$ and $E''$ be closures of the two components of $E - \{v\},$ then there is an integer $r \geq 0$ such that $2r \leq p$ 
and (after permuting subscripts):
\begin{enumerate}[label=(\roman*)]
	\item $q$ maps the closure of each component of $E_i - \{v_i\}$ onto $E'$ for $1 \leq i \leq r,$
	\item $q$ maps the closure of each component of $E_i - \{v_i\}$ onto $E''$ for $r+1 \leq i \leq 2r,$ and
	\item $q$ maps $E_i$ homeomorphically onto $E$ for $2r+1 \leq i \leq p.$
\end{enumerate}
(Note that if $2r = p$ so that $f$ has either a local maximum or a local minimum at each point of $q^{-1}(\{v\}),$ then $f$ can be perturbed by homotopy on a small neighborhood of $q^{-1}(\{v\})$ so that its image misses $v$ making $f : \partial \Delta\to J$ homotopic to a constant map.)  Continuing our analysis in the case that $v \in \inr(E),$ again let $N$ be a regular neighborhood of $E$ in $A.$  Then $q^{-1}(N)$ is a regular neighborhood of $q^{-1}(E)$ in $q^{-1}(A).$  Again since $q^{-1}(A)$ is a nice 2-dimensional submanifold of $\Delta,$ it follows that $q^{-1}(N)$ is the union of $p$ disjoint disks $N_1,\ \ldots\ ,\ N_p$ with $N_i \cap \partial \Delta = E_i$ for $1 \leq i \leq p.$  Furthermore, $q$ embeds each $N_i - E_i$ in $N - E$ such that $q(N_i - E_i) \cap q(N_j - E_j) = \emptyset$ for $i \neq j.$  Thus, $N$ is an \emph{open pop-up book} in which the pages $q(N_1),\ \ldots\ ,\ q(N_r)$ are cones attached along one half of the binding $E,$ the pages $q(N_{r+1}),\ \ldots\ ,\ q(N_{2r})$ are cones attached along the other half of the binding $E,$ and the pages $q(N_{2r+1}),\ \ldots\ ,\ q(N_p)$ are disks attached along the entire binding $E.$  %(See Figure 5.)

%[Figure 5]

Next we argue that each component of $q^{-1}(A)$ is a disk.  We first observe that every component of $q^{-1}(A)$ must intersect $\partial \Delta.$  For suppose $C$ is a component of $q^{-1}(A)$ that is disjoint from $\partial \Delta.$  Then $C$ and $q^{-1}(A) - C$ are disjoint closed subsets of $\Delta$ that are mapped by $q$ to disjoint closed subsets of $D$ whose union is $A.$  This contradicts our earlier supposition that $A$ is connected.

Now suppose $C$ is a component of $q^{-1}(A)$ that is not a disk.  Then $C$ is a disk with holes and there is a component $K$ of $\partial C$ that lies $\inr(\Delta)$ and bounds a disk $F$ in $\inr(\Delta).$  $\inr(F) \cap q^{-1}(A) = \emptyset$ because every component of $q^{-1}(A)$ intersects $\partial \Delta.$   Since $\Delta - \inr(F)$ is an annulus with boundary components $\partial \Delta$ and $K,$ and since $q^{-1}(A) \cap \partial \Delta$ is a proper subset of $\partial \Delta,$ then there is a retraction of $\Delta - \inr(F)$ onto $K$ that maps $q^{-1}(A) \cap \partial \Delta$ to a single point $z \in K.$  We restrict this retraction to $C$ to obtain a retraction of $C$ onto $K$ that maps $C \cap \partial \Delta$ to $z.$  We conjugate this retraction by $q^{-1}$ to obtain a retraction of $q(C)$ onto $q(K)$ that maps $q(C) \cap J$ to $q(z).$  This retraction extends to a retraction $r$ of $A$ onto $q(K)$ which sends $A - q(C \cap \inr(\Delta))$ to $q(z).$  Then $r\circ j = \id_q(K)$ where $j : q(K)\to A$ denotes the inclusion.  Hence, the identity map $H_1(q(K))\to H_1(q(K))$ factors through the finite group $H_1(A).$  Since $H_1(q(K)) \approx Z,$ this is impossible.  Thus, each component of $q^{-1}(A)$ must be a disk.  

The next step of the proof is to construct a graph $S$ in $A$ which is a spine of $A$ in the sense that there is a strong deformation retraction of $A$ onto $S.$  Recall that $\mcal{E}$ denotes the set of components of $A \cap J,$ and each element of $\mcal{E}$ is an arc in $J.$  Let $\mcal{F}$ denote the set of components of $A - J.$  Then each $F \in \mcal{F}$ is a 2-dimensional submanifold of $D - J,$ and $\cl(F)$ is a compact subpolyhedron of $D$ such that each component of $\cl(F) \cap J$ is either an element of $\mcal{E}$ or a subarc of an element $E$ of $\mcal{E}$ joining one endpoint of $E$ to the point $v$ which lies in $\inr(E).$

 $S$ has two types of vertices: 
\begin{enumerate}[label=(\roman*)]	
\item a point $w_E \in \inr(E)$ for each $E \in \mcal{E}$ such that $w_E = v$ if $v \in \inr(E),$ and 
\item a point $w_F \in \inr(F)$ for each $F \in \mcal{F}.$ 
\end{enumerate}
 
In other words, $\{ w_E : E \in E \} \cup \{ w_F : F \in \mcal{F} \}$ is the set of vertices of $S.$  For each $F \in \mcal{F},\ q^{-1}(F)$ is a component of $q^{-1}(A) \cap \inr(\Delta)$ because $q$ maps $q^{-1}(A) \cap \inr(\Delta)$ homeomorphically onto $A - J.$  Let $P(F) = \cl(q^{-1}(F)).$  Then Proposition \ref{inverses} implies that $P(F)$ is a component of $q^{-1}(A).$  Hence, $P(F)$ is a nice 2-dimensional submanifold of $\Delta$ and, in particular, as we argued in a previous paragraph, $P(F)$ is a disk.  Furthermore, $q^{-1}(w_F) \in \inr(P(F)).$  Let $B(F) = P(F) \cap q^{-1}( \{ w_E : E \in \mcal{E} \} ).$  Then $B(F)$ is a finite subset of $P(F) \cap \partial \Delta \subset \partial P(F).$  Let $K(F)$ be a cone in $P(F)$ with vertex $q^{-1}(w_F)$ and base $B(F)$ such that $K(F) \cap \partial P(F) = B(F).$  Then $q(K(F))$ is a graph in $\cl(F).$  Let $S = \cup \{ q(K(F)) : F \in \mcal{F} \}.$  Then $S$ is a graph in $A.$

There is a strong deformation retraction of $A$ onto $S$ which is constructed in several steps.  Begin with a strong deformation retraction of $A \cap J$ onto $\{ w_E : E \in \mcal{E} \}.$  Lift this strong deformation retraction (via conjugation by $q^{-1}$) to a strong deformation retraction of $q^{-1}(A \cap J)$ onto $q^{-1}(\{ w_E : E \in \mcal{E} \} ).$  For each $F \in \mcal{F},$ this strong deformation retraction restricts to a strong deformation retraction of $P(F) \cap \partial \Delta$ onto $B(F)$ which, in turn, extends to a strong deformation retraction of $P(F)$ onto $K(F).$  This strong deformation retraction pushes down (via conjugation by $q$) to a strong deformation retraction of $\cl(F)$ onto $q(K(F))$ which extends the original strong deformation retraction of $\cl(F) \cap J$ onto $(\{ w_E : E \in \mcal{E} \}) \cap \cl(F).$  The union of these strong deformation retractions is a strong deformation retraction of $A$ onto $S.$

The end of the strong deformation retraction of $A$ onto $S$ is a retraction map $r : A\to S.$  Then $r\circ \sigma = \id_S$ where $\sigma : S\to A$ denotes the inclusion.  Hence, the identity map $H_1(S)\to H_1(S)$ factors through the finite group $H_1(A).$  Thus, $H_1(S)$ must be a finite group.  It follows that since $S$ is a graph, then $H_1(S) = 0$ and $S$ must be a tree.
	
	Our next goal is to show that $\cl(F)$ is a disk for each $F \in \mcal{F}.$  Let $F \in \mcal{F}.$  $q$ maps the disk $P(F)$ onto $\cl(F),$ and $q$ maps $P(F) - \partial \Delta$ homeomorphically onto $\cl(F) - J.$  It remains to analyze the behavior of $q$ on the components of $P(F) \cap \partial \Delta.$  Since $S$ is a tree, then $q$ can't identify two distinct points of $B(F).$  So if $x \in q^{-1}(\{w_E\})$ and $x' \in q^{-1}(\{w_E'\})$ are distinct points of $B(F)$ where $E$ and $E' \in \mcal{E},$ then $w_E \neq w_E'$ and, hence, $E \neq E'.$  Therefore, $q$ must map distinct components of $P(F) \cap \partial \Delta$ into distinct components of 
$A \cap J.$  (Said another way: if $E \in \mcal{E}$ and $\cl(F) \cap E \neq \emptyset,$ then $P(F)$ contains exactly one of  
the $p$ components of $q^{-1}(E)$ and is disjoint from all the others.)  Suppose $E \in \mcal{E}$ and $\cl(F)$ 
$\cap E \neq \emptyset.$  There are two possible cases.\\ 
\begin{enumerate}[label=(\roman*)]
	\item $E \subset \cl(F).$  In this case, $P(F)$ contains a unique component $E^*$ of $q^{-1}(E)$ and $q$
maps $E^*$ homeomorphically onto $E.$ \\
	\item $E \not\subset \cl(F).$  (This case occurs only if $v \in \inr(E).$  Hence, there is at most one $E \in \mcal{E}$ for which $\cl(F) \cap E \neq \emptyset$ and $E \not\subset \cl(F).$)  In this case, $E' = E \cap \cl(F)$ is the closure of one of the two components of $E - \{v\}.$  Again $P(F)$ contains a unique component $E^*$ of $q^{-1}(E).$  Furthermore, there is a unique $v^* \in \inr(E^*)$ such that $q(v^*) = v,$ and $q$ maps each component of $E^* - \{v^*\}$ homeomorphically onto $E' - \{v\}.$   In this situation, we might say that $q$ \emph{zips up} $E^*,$ mapping the open arc $\inr(E^*)$ 
that lies in $\partial P(F)$ to the half open arc $\inr(E') \cup \{v\}$ that lies in $\inr(\cl(F)$).
\end{enumerate}

$q$ maps $P(F)$ onto $\cl(F)$ and $q$ maps $P(F) - \partial \Delta$ homeomorphically onto $\cl(F) - J.$  In the event that case ii) doesn't occur, then $q$ maps each component of $P(F) \cap \partial \Delta$ homeomorphically onto a corresponding component of $\cl(F) \cap J.$  In this situation $q$ maps $P(F)$ homeomorphically onto $\cl(F),$ making $\cl(F)$ a disk.  In the event that case ii) occurs (for only one $E \in \mcal{E}$ for which $\cl(F) \cap E \neq \emptyset$), then $q$ maps all but one of the components of $P(F) \cap \partial \Delta$ homeomorphically onto a corresponding component of $\cl(F) \cap J,$ and $q$ \emph{zips up} the remaining component of $P(F) \cap \partial \Delta$ as described above.  In this situation, although $q|P(F) : P(F)\to \cl(F)$ is not a homeomorphism, nonetheless $q|P(F)$ does not alter the topological type of $P(F).$  Hence, $\cl(F)$ is also a disk in the event that case ii) occurs.

Since $A$ is the union of the collection of disks $\{ \cl(F) : F \in \mcal{F} \}$ and the intersection pattern of $\{ \cl(F) : F \in \mcal{F} \}$ is encoded by the tree $S,$ we call $A$ a \emph{tree of disks}.

Recall that the point $b$ lies in $J - B$ and $b \neq v.$  Hence, $b \in A$ and there is an $E \in \mcal{E}$ such that $b \in \inr(E).$  

To complete the proof of the Theorem, we need:

%%Lemma 2.  
\begin{lemma}\label{mono on homology}If $G'$ is a disk in $D$ which intersects $J$ in an arc $E'$ such that $v \notin G',$ $\partial E' \subset \partial G'$ and $b \in \inr(E') \subset \inr(G'),$ then the inclusion map $j :  \partial G'\to D - \{b\}$ induces a monomorphism on first homology.
\end{lemma}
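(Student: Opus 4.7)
My plan is to exploit the long exact homology sequence of the pair $(D, D-\{b\})$,
\[
\cdots \to H_2(D) \xrightarrow{i_*} H_2(D, D-\{b\}) \xrightarrow{\partial_*} H_1(D-\{b\}) \to H_1(D) \to 0,
\]
together with excision near $b$. Since $j_*[\partial G'] = \partial_*[G']$, where $[G']$ is the relative class of the disk $G'$, and since $H_1(\partial G') \cong \mathbb{Z}$ is generated by $[\partial G']$, it suffices to show that $[G']$ has infinite order modulo the image of $i_*$.

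First I model the relative homology near $b$. A small closed regular neighborhood $N$ of $b$ in $D$ is an ``open book'' with $p$ pages: $N = I \cup H_1 \cup \cdots \cup H_p$, where $I = N \cap J$ is a subarc of $J$ containing $b$ in its interior and each $H_k$ is a half-disk attached to $I$ along its diameter (arising as the $q$-image of one of the $p$ half-disk components of $q^{-1}(N) \subset \Delta$). By excision, $H_2(D, D-\{b\}) \cong H_2(N, N-\{b\})$. Since $N$ is contractible and $N - \{b\}$ deformation retracts to the graph with two vertices (the endpoints of $I$) and $p$ edges $C_1, \ldots, C_p$ (the curved boundaries of the pages), the long exact sequence of $(N, N-\{b\})$ gives a connecting isomorphism
\[
\partial_* \colon H_2(N, N-\{b\}) \xrightarrow{\cong} H_1(N-\{b\}) \cong \bigl\{(a_1, \ldots, a_p) \in \mathbb{Z}^p : \textstyle\sum_k a_k = 0\bigr\},
\]
with $e_k$ corresponding to $[C_k]$.

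Next I locate the two relevant classes. Because $G'$ is a 2-disk and the local structure of $D$ along $\inr(E')$ is a $p$-page book, the two half-disks $G'_1$ and $G'_2$ (closures of the components of $G' - E'$) must sit on two distinct pages at $b$, say $H_i$ and $H_j$; connectedness of $G'_1$ and $G'_2$ ensures they use the same two pages throughout $\inr(E')$. A small meridian of $b$ in $G' - \{b\}$ is homologous there to $\partial G'$ and, under the above isomorphism, corresponds to $\pm(e_i - e_j)$ with $i \neq j$. On the other hand, $D$ has a single 2-cell, so $H_2(D)$ is either $0$ or $\mathbb{Z}\langle [\Delta]\rangle$; in the nontrivial case $[\Delta]$ corresponds under the same identification to $\sum_{k=1}^p \epsilon_k e_k$, with $\epsilon_k \in \{\pm 1\}$ the local orientation of $q$ on page $H_k$.

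Finally I carry out the linear-algebra check and identify the main obstacle. Suppose $n(e_i - e_j) = c \sum_k \epsilon_k e_k$ for integers $n, c$. Since $p \geq 3$, pick an index $k_0 \notin \{i, j\}$; comparing the $e_{k_0}$-coefficient gives $c \epsilon_{k_0} = 0$, whence $c = 0$, and then $n(e_i - e_j) = 0$ forces $n = 0$. Thus $[G']$ has infinite order modulo $i_*(H_2(D))$, so $\partial_*[G'] = j_*[\partial G']$ has infinite order in $H_1(D-\{b\})$, and $j_*$ is injective. The main obstacle I anticipate is the geometric claim that $G'_1$ and $G'_2$ land on two distinct pages at $b$: this relies on the fact that a full 2-disk neighborhood of any point of $\inr(E') \setminus \{v\}$ in $D$ must span exactly two pages (each single page being only a half-disk meeting $J$ along its straight edge), and on connectedness of the $G'_i$ to propagate this choice of pages consistently along all of $\inr(E')$.
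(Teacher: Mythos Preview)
Your argument is correct and takes a genuinely different route from the paper. The paper constructs an explicit detecting map: it retracts $D-\{b\}$ onto $q(T)$, where $T\subset\Delta$ is a cone over $q^{-1}(\{v\})$, then collapses $q(T)$ to a wedge $C_1\vee C_2$ and checks directly that $[\partial G']$ is sent to $\pm(\gamma_1\pm\gamma_2)$; the hypothesis $p\ge 3$ enters as the geometric fact that the cone-arcs $L_1,L_2$ associated to $G'_1,G'_2$ can be chosen distinct from one another (so that the collapse to $C_1\vee C_2$ separates them). Your approach instead uses the long exact sequence of $(D,D-\{b\})$ together with excision to the book neighborhood $N$, reducing the question to a rank computation in $H_1$ of the link graph $\Gamma$; here $p\ge 3$ supplies a third coordinate $k_0\notin\{i,j\}$ that forces $c=0$ in your linear-algebra step. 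Your proof is shorter and more homological; the paper's is more hands-on and yields an explicit witness map out of $D-\{b\}$.

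Two small remarks on your write-up. First, your ``main obstacle''---that $G'_1$ and $G'_2$ occupy distinct pages at $b$---is settled cleanly by invariance of domain: if an open $2$-disk neighborhood of $b$ in $G'$ were embedded in a single page $H_k$, its image would have to lie in $\inr(H_k)$, contradicting $b\in I\subset\partial H_k$. Second, the consistency of the page choices along all of $\inr(E')$ is not actually needed for the argument; only the local picture at $b$ enters into the excised class, so the connectedness remark, while true, is extraneous.
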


\begin{proof}  Let $G_1'$ and $G_2'$ be the closures of the two components of $G'- E'.$  Then $G_1'$ and $G_2'$ are disks in $D$ such that $G_i \cap J = E' \subset \partial G_i'$ for $i = 1, 2$ and $G_1' \cap G_2' = E'.$  For $i = 1, 2,$ let $G_i^* = \cl(q^{-1}(G_i' - E')).$  Then $G_i^*$ is a disk in $\Delta$ which $q$ maps homeomorphically onto $G_i'.$  Also $E_i^* = G_i^* \cap \partial \Delta$ is an arc in $\partial \Delta$ which is a component of $q^{-1}(E').$  Since different components of $q^{-1}(E')$ must lie in distinct components of $\partial \Delta - q^{-1}(\{v\}),$ then $E_1^*$ and $E_2^*$ lie in distinct components of $\partial \Delta - q^{-1}(\{v\}).$

Form a cone $T$ in $\Delta$ over the $p$-point set $q^{-1}(\{v\})$ with vertex at a point $z \in \inr(\Delta)$ such that $T \cap \partial \Delta = q^{-1}(\{v\}).$  ($T$ is the union of $p$ arcs joining $z$ to the points of $q^{-1}(v)$ such that the interiors of the arcs lie in $\inr(\Delta)$ and are disjoint from each other.)  There is a homeomorphism of $\Delta$ which fixes $\partial \Delta$ and pulls $G_1^*$ and $G_2^*$ off $T.$  The inverse of this homeomorphism moves $T$ off $G_1^* \cup G_2^*.$  So we can assume $T$ is disjoint from $G_1^* \cup G_2^*.$  Observe that $q(T)$ is the suspension of $p$ points with \emph{poles} at $q(z)$ and $v.$

Since $E_1^*$ and $E_2^*$ lie in distinct components of $\Delta - T,$ then so do $G_1^*$ and $G_2^*.$  Let $L_1,\ L_1',\ L_2$ and $L_2'$ be arcs of $T$ (joining $z$ to points of $q^{-1}(\{v\})$) labeled so that $L_i \cup L_i'$ is the frontier of the component of $\Delta - T$ that contains $G_i^*$ for $i = 1, 2.$  Note that because $p \geq 3,$ the sets $\{ L_1, L_1' \}$ and $\{ L_2, L_2' \}$ have at most one element in common.  So we can assume that $L_1,\ L_1',\ L_2$ and $L_2'$ are labeled so that $L_1 \notin \{ L_2, L_2' \}$ and $L_2 \notin \{ L_1, L_1' \}.$  %(See Figure 6.)

%[Figure 6]

There is a unique retraction of $\partial \Delta - q^{-1}(\{b\})$ onto $q^{-1}(\{v\})$ which extends to a retraction $r^*$ of $\Delta - q^{-1}(\{b\})$ onto $T$ that maps $\partial G_i^* - \inr(E_i^*)$ homeomorphically onto $L_i \cup L_i'$ for $i = 1, 2.$  $r^*$ induces a retraction $r$ of $D - \{b\}$ onto $q(T)$ (via conjugation by $q$) that maps $J - \{b\}$ to $\{v\}$ and maps the open arc $\partial G_i' - E'$ homeomorphically onto the open arc $q(L_i \cup L_i') - \{v\}$ for $i = 1, 2.$

  	Let $C$ denote the wedge of two circles $C_1$ and $C_2$ which intersect in the one-point set $\{c\}.$  Let $s : q(T)\to C$ be a map which maps $q(\inr(L_i))$ homeomorphically onto $C_i - \{c\}$ for $i = 1, 2$ and which maps $q(T - (\inr(L_1) \cup \inr(L_2)))$ to $\{c\}.$  $H_1(C) \approx Z \oplus Z$ and there is a generating set $\{\gamma_1, \gamma_2\}$ for $H_1(C)$ such that the subgroup of $H_1(C)$ generated by $\gamma_i$ is the image of the inclusion induced map $H_1(C_i)\to H_1(C).$  Observe that $s\circ r\circ j|\partial G_i' - \inr(E')$ is homotopic rel endpoints to a map which takes $\partial G_i' - E'$ homeomorphically onto $C_i - \{c\}$ and takes $\partial E'$ to $\{c\}.$  Thus, the induced map $s_*\circ r_*\circ j_* : H_1(\partial G')\to H_1(C)$ maps a generator of $H_1(\partial G') \approx Z$ onto one of $\pm(\gamma_1 + \gamma_2)$ or $\pm(\gamma_1 - \gamma_2).$  Since these elements of $H_1(C)$ are of infinite order, then $s_*\circ r_*\circ j_*$ is a monomorphism.  It follows that $j_* : H_1(\partial G')\to H_1(D - \{x\})$ must be a monomorphism. %
\end{proof}
Finally we come to the heart of the argument.  Recall $b \in J - B \subset A,$ $b \neq v$ and there is an $E \in \mcal{E}$ such that $b \in \inr(E).$  The vertex $w_E$ of the tree $S$ also lies in $\inr(E).$  $w_E$ is the endpoint of $p$ different edges of $S$ whose other endpoints are of the form $w_{F_1},\ \ldots\ , w_{F_p}$ where $F_1,\ \ldots ,\ F_p$ are distinct elements of $\mcal{F}$ such that $E \cap \cl(F_i) \neq \emptyset.$  Hence, $S - \{w_E\}$ has $p$ components whose closures we label $S_1,\ \ldots ,\ S_p$ so that $w_{F_i}$ is a vertex of $S_i$ for $1 \leq i \leq p.$  Each $S_i$ is a tree that has $w_E$ as an endpoint.  For $1 \leq i \leq p,$ let $A_i = \cup \{ \cl(F) : F \in \mcal{F}$ and $w_F$ is a vertex of $S_i \}.$  Then $A = A_1 \cup \ldots \cup A_p$ and $A_i \cap A_j \subset E$ for $i \neq j.$  

  We break the remainder of the proof into two cases. 
\begin{enumerate}[label=(\roman*)]
\item Either $v \notin E,$ or $v \in \inr(E)$ and there are at least two distinct elements $F$ of $\mcal{F}$ such 
that $E \subset \cl(F).$  
\item $v \in \inr(E)$ but there is at most one element $F$ of $\mcal{F}$ such that $E \subset \cl(F).$
\end{enumerate}

\textbf{Case i): $v \notin E,$ or $v \in \inr(E)$ and there are at least two distinct elements $F$ of $\mcal{F}$ such that $E \subset \cl(F).$}  In the situation that $v \notin E, E \subset \cl(F_i)$ for $1 \leq i \leq p.$  Also since $(A_i - E) \cap (A_j - E) =  \emptyset$ for $i \neq j,$ then $v$ belongs to at most one of the sets $A_i - E.$  Therefore, since $p \geq 3,$ we can assume that, after reindexing, $v \notin A_1 \cup A_2.$  Consequently, in Case i), we can assume $E \subset \cl(F_1),\ E \subset \cl(F_2),$ and either $v \notin A_1 \cup A_2$ or $v \in \inr(E).$

 Now for $i = 1, 2,$ let $T_i$ be a maximal subtree of $S_i$ that contains $w_E$ and has order 2 at every other vertex of $T_i$ of the form $w_E'$ where $E' \in \mcal{E}.$  In other words, to construct $T_i,$ orient the edges of $S_i$ away from $w_E,$ include the edge from $w_E$ to $w_{F_i}$ and all the edges emanating from $w_{F_i},$ but only one edge emanating from a vertex $w_{E'}$ that is at the terminal end of an edge emanating from $w_{F_i}.$  Continue in this fashion to construct $T_i$ inductively: when one encounters a new vertex at the terminal end of an edge, add only one outgoing edge if the vertex is of the form $w_{E'}$ ($E'\in \mcal{E}$), but add all the outgoing edges if the vertex is of the form $w_{F'}$ ($F' \in \mcal{F}$).   

For $i = 1, 2,$ let $G_i = \cup \{ \cl(F) : F \in \mcal{F}$ and $w_F$ is a vertex of $T_i \}.$  Then $F_i \subset G_i \subset A_i.$  Moreover, each $G_i$ is a union of finitely many disks such that the intersection of any two of the disks is either empty or an arc lying in the boundary of each, and the intersection of any three distinct disks is empty.  Thus, each $G_i$ is a compact 2-manifold.  The strong deformation retraction of $A$ onto $S$ restricts to a strong deformation retraction of $A_i$ onto $S_i$ which, in turn, restricts to a strong deformation retraction of $G_i$ onto $T_i.$  Thus, each $G_i$ is contractible and, hence, is a disk.  Clearly, $E$ is an arc in $\partial G_i.$  %(See Figure 7.)

%[Figure 7]

Observe that $\fr(A) = \cup \{ \cl_D(\partial F) : F \in \mcal{F} \}.$  Also observe that for $i = 1, 2,$ $\partial G_i = ( \cup \{ \cl_D(\partial F) : F \in \mcal{F}$ and $w_F$ is a vertex of $T_i \} ) \cup \inr(E).$  Thus, $\partial G_i - \inr(E) \subset \fr(A).$

Let $G = G_1 \cup G_2.$  Since $G_1 \cap G_2 = E,$ then $G$ is a disk, $\partial G = (\partial G_1 - \inr(E)) \cup (\partial G_2 - \inr(E))$ and $\inr(E) \subset \inr(G).$  Hence, $\partial G \subset \fr(A).$  Since $D = A \cup B$ and $B$ is a closed subset of $D,$ then $\fr(A) \subset B.$  Therefore, $\partial G \subset B.$  Furthermore, either $v \notin G$ if  $v \notin E,$ or $v \in \inr(G)$ if $v \in \inr(E).$

Since $b \in \inr(E) - \{v\} \subset \inr(G),$ we can choose an arc $E'$ in $\inr(E) - \{v\}$ such that $b \in \inr(E'),$ and for $i = 1, 2,$ we can choose a disk $G_i' \subset \cl(F_i)$ such that $G_i' \cap \partial (\cl(F_i)) = E'.$  Hence, $E' \subset \partial G_1' \cap \partial G_2'$ and $G_1' \cap G_2' = E'.$  Thus, $G_i'- E' \subset F_i.$  Therefore, $G'= G_1' \cup G_2'$ is a disk in $\inr(G)$ such that $v \notin G'$ and $b \in \inr(E') \subset \inr(G').$  Let $j : \partial G'\to D - \{b\}$ denote the inclusion map.  Since $G - \inr(G')$ is an annulus in $D - \{b\}$ with boundary components $\partial G$ and $\partial G',$ then $j$ is homotopic in $D - \{b\}$ to a map $k : \partial G'\to D - \{b\}$ such that $k(\partial G') \subset \partial G.$  Hence, the induced maps $j_* : H_1(\partial G')\to H_1(D - \{b\})$ and $k_* : H_1(\partial G')\to H_1(D - \{b\})$ are equal.  Since $\partial G \subset B \subset D - \{b\},$ then $k_*$ factors through the finite group $H_1(B).$  Since $H_1(\partial G') \approx Z,$ it follows that $k_*$ and, hence, $j_*$ are not monomorphisms, contradicting Lemma \ref{mono on homology}.  This concludes the proof of the Theorem in Case i). \qed %

\textbf{Case ii): $v \in \inr(E)$ but there is at most one element $F$ of $\mcal{F}$ such that $E \subset \cl(F).$}  Let $E_1$ and $E_2$ denote the closures of the two components of $E - \{v\}.$  Since $b \in \inr(E) - \{v\},$ then we can assume $b \in \inr(E_1).$  There are $p$ distinct elements $F$ of $\mcal{F}$ such that $E \cap \cl(F) \neq \emptyset$ where $p \geq 3.$  Hence, in this case there is an $F \in \mcal{F}$ such that $E \cap \cl(F) \neq \emptyset$ but $E \not\subset \cl(F).$  In this situation, $E \cap \cl(F)$ equals either $E_1$ or $E_2.$  If we orient $\partial \Delta$ and $J,$ then one of the two outcomes $E \cap \cl(F) = E_1$ and $E \cap \cl(F) = E_2$ represents a local maximum of the map $q|\partial \Delta : \partial \Delta\to J$ at a point of $\partial P(F) \cap q^{-1}(E)$ while the other represents a local minimum.  According to Proposition \ref{maxs = mins}, the number of points at which $q|\partial \Delta$ has local maxima equals the number of points at which it has local minima.  Consequently, since at least one of the two outcomes - $E \cap \cl(F) = E_1$ and $E \cap \cl(F) = E_2$ - occurs, then both must occur.  Therefore, we can choose $F_1 \in \mcal{F}$ such that $E \cap \cl(F_1) = E_1.$

Recall that $P(F_1) = \cl(q^{-1}(F_1))$ is a disk which $q$ maps onto the disk $\cl(F_1).$  Let $E^* = P(F_1) \cap q^{-1}(E).$  Then $E^*$ is a component of $q^{-1}(E)$ and of $P(F_1) \cap \partial \Delta$ which $q$ maps onto $E_1.$  Moreover, there is a unique point $v^* \in \inr(E^*)$ such that $q(v^*) = v$ and $q$ maps each component of $E^* - \{v^*\}$ homeomorphically onto $E_1 - \{v\}.$  Further recall that $q$ maps $P(F_1) - E^*$ homeomorphically onto $\cl(F_1) - E_1$ and $q$ \emph{zips up} $E^*$ so that $q(\inr(E^*)) = \inr(E_1) \cup \{v\}$ is a subset of $\inr(\cl(F_1)) = \inr(F_1).$  

In this situation, we chose $w_E = v.$  Let $S_1$ be the closure of the component of $S - \{w_E\}$ that contains $w_{F_1}.$  Let $A_1 = \cup \{ \cl(F) : F \in \mcal{F}$ and $w_F$ is a vertex of $S_1 \}.$  Then $S_1$ is a tree and $w_E$ is an endpoint of $S_1.$  Let $T_1$ be a maximal subtree of $S_1$ that contains $w_E$ and has order 2 at every other vertex of $T_1$ of the form $w_{E'}$ where $E' \in \mcal{E}.$  Let $G_1 = \cup \{ \cl(F) : F \in \mcal{F}$ and $w_F$ is a vertex of $T_1 \}.$  Since $G_1$ is a finite union of disks that meet in boundary arcs of each and have empty triple intersections, then $G_1$ is a compact 2-manifold.  The strong deformation retraction of $A$ onto $S$ restricts to a strong deformation retraction of $A_1$ onto $S_1$ which, in turn, restricts to a strong deformation retraction of $G_1$ onto $T_1.$  Thus, $G_1$ is contractible and, hence, is a disk.

	$w_E$ is an endpoint of the tree $T_1,$ and the unique edge of $T_1$ containing $w_E$ has its other endpoint at $w_{F_1}.$  Hence, $\cl(F_1)$ is the only element of $\{ \cl(F) : F \in \mcal{F}$ and $w_F$ is a vertex of $T_1 \}$ that intersects $E.$  Since $\cl(F_1) \cap E = E_1,$ then $G_1 \cap E = E_1.$  Thus, $\inr(E_1) \cup \{v\} \subset \inr(F_1) \subset \inr(G_1).$  Hence, $b \in \inr(E_1) \subset \inr(G_1).$  In this case, $\partial G_1 = \cup \{ \cl_D(\partial F) : F \in \mcal{F}$ and $w_F$ is a vertex of $T_1 \}.$   Hence, as in the previous case, $\partial G_1 \subset \fr(A) \subset B.$  %(See Figure 8.)

%[Figure 8]

	Choose an arc $E'$ in $\inr(E_1)$ such that $b \in \inr(E').$  Then $v \notin E'.$  For $i = 1, 2,$ choose disks $G_i' \subset \inr(F_1)$ such that $G_i' \cap E_1 = E' \subset \partial G_i'$ and $G_1' \cap G_2' = E'.$  Let $G' = G_1' \cup G_2'.$  Then $G'$ is a disk in $\inr(F_1)$ and, hence, in $\inr(G_1)$ such that $v \notin G'$ and $b \in \inr(G').$  Let $j : \partial G'\to D - \{b\}$ denote the inclusion map.  Since $G_1 - \inr(G')$ is an annulus in $D - \{b\}$ with boundary components $\partial G'$ and $\partial G_1,$ then $j$ is homotopic in $D - \{b\}$ to a map $k : \partial G'\to D - \{b\}$ such that $k(\partial G') \subset \partial G_1.$  Hence, the induced maps $j_* : H_1(\partial G')\to H_1(D - \{b\})$ and $k_* : H_1(\partial G')\to H_1(D - \{b\})$ are equal.  Since $\partial G_1 \subset B \subset D - \{b\},$ then $k_*$ factors through the finite group $H_1(B).$  Since $H_1(\partial G') \approx Z,$ it follows that $k_*$ and, hence, $j_*$ are not monomorphisms, contradicting Lemma \ref{mono on homology}.  This concludes the proof of the Theorem in Case ii). \qed \\%\\

\textbf{References}\\

[AGS]  F. D. Ancel, C. R. Guilbault, P. Sparks, in preparation.

[Bo]  E. Borghini, Some remarks on PL collapsible covers of 2-dimensional polyhedra, arXiv:1802.01538. 

[Br]  J. L. Bryant, Piecewise linear topology, pages  219-259 in Handbook of Geometric Topology, edited by R. J Daverman and R. B. Sher, Elsevier, 2002.

[G]  D. Gabai, The Whitehead manifold is the union of two Euclidean spaces, J. Topol. 4 (2011), 529-534.

[GRW]  D. J. Garity, D. Repovs, D. G. Wright, Contractible 3-manifolds and the double 3-space property, to appear in Trans. Amer. Math. Soc.

[GST]  B. Green, N. A. Scoville, M. Tsuruga, Estimating the discrete geometric Lusternik-Schnirelmann category, Topol. Methods Nonlinear Anal. 45 (2015), 103-116.

[H]  A. Hatcher, Algebraic Topology, Cambridge University Press, 2002. 

[M]  B. Mazur, A note on some contractible 4-manifolds, Annals of Math. 73 (1961), 221-228.

[S]  P. Sparks, Contractible $n$-manifolds and the double $n$-space property, arXiv:1502.03154 (2015).

[W]  J. H. C. Whitehead, A certain open manifold whose group is unity, Quart. J. Math 6 (1935), 268-279.

[Z]  E. C. Zeeman, On the dunce hat, Topology 2 (1964),

\end{document}